\newtheorem{theorem}{Theorem}[section]
\newtheorem{claim}[theorem]{Proposition}
\newtheorem{remark}[theorem]{Remark}
\newtheorem{corollary}[theorem]{Corollary}
\begin{document}

\newcounter{gl}
\title{Ergodic theorems for $L_1$--$L_\infty$ contractions in  Banach--Kantorovich $L_p$-lattices}

\author{V. I. Chilin }
\address{Department of Mathematics, National University of Uzbekistan,
Vuzgorodok, 100174, Tashkent, Uzbekistan}
\ead{chilin@ucd.uz}

\author{I.G. Ganiev}
\address{Department of Science in Engineering, Faculty of Engineering,%
  International Islamic University Malaysia, P.O. Box 10, 50728
Kuala Lumpur, MALAYSIA}
\ead{ganiev1@rambler.ru}

\date{}
\begin{abstract}
We present versions of  ergodic theorems for $L_1$--$L_\infty$ contractions in  Banach--Kantorovich $L_p$-lattices
associated with the Maharam measure taking values in the algebra
of measurable functions.
\end{abstract}

\begin{keyword}Banach--Kantorovich lattice; measurable Banach
bundle; Maharam measure; $L_1$--$L_\infty$ contraction; ergodic
theorem

\MSC[2010]{ 37A30, 46G10, 47A35.}
\end{keyword}

\maketitle

\section{Introduction}

Let $(\Omega,\Sigma,\mu)$ be a measure space with $\sigma$-finite
measure $\mu$ and let $L_p(\Omega,\Sigma,\mu)$ be the Banach space of
all measurable functions $f$ on $(\Omega,\Sigma,\mu)$ such that
 $\| f \|_p  = \left( \int\limits_{\Omega} | f |^p d\mu  \right)^{\frac{1}{p}}<\infty, \quad (1\leq p<\infty).$ The well-known M.A. Akcoglu's theorem \cite{Akc} asserts that
 for any positive contraction $T$ in the space $L_p(\Omega,\Sigma,\mu)$, $1 < p < \infty$, the averages $s_n(T)(f)=\frac{1}{n}
 \sum\limits_{i=0}^{n-1} T^i (f)$ converge almost everywhere for every $f\in L_p(\Omega,\Sigma,\mu)$, in addition $f^*=\sup\limits_{n\geq
1}s_n(T)(|f|)\in  L_p(\Omega,\Sigma,\mu)$ and $\| f^* \|_p\leq
\frac{p}{p-1}\|f\|_{p}.$ The same ergodic theorem holds for non
positive $L_1 - L_{\infty}$ contraction $T$ in
$L_p(\Omega,\Sigma,\mu)$ \cite[ch.VIII, \S 6]{DSh}. Further
development extends this ergodic  theorem to the space of
Banach-valued functions as follows (see e.g.
\cite[ch.4, \S 4.2]{Kr}): if  $T$ is an $L_1$--$L_\infty$ contraction
in the space $L_p(\Omega,X)$ of
 Bochner  maps from
$(\Omega,\Sigma,\mu)$ into a reflexive   Banach space $(X,
\|\cdot\|_X)$, $1<p<\infty,$ then there exists $\widetilde{f}\in
L_p(\Omega,X)$
 such that $\|s_n(T)(f)(\omega)-\widetilde{f}(\omega)\|_X\rightarrow
 0$  almost everywhere on $(\Omega,\Sigma,\mu)$ for any $f\in
L_p(\Omega,X)$.

The further development of the theory of vector measures
$m:\nabla\rightarrow E$
 on a complete Boolean algebra $\nabla$ with values in order
 complete vector lattices provides  new
 important examples of Banach--Kantorovich spaces \cite[ch. IV, V]{K2}. In particular, the $L_p$ - space $L_p(\nabla,m)$  associated with a
modular measure $m$, which admits as measurable bundle via
classical $L_p$--spaces \cite{Ga3}, is an example of such spaces.
It is natural to expect, that M.A. Akcoglu's  and N.Danford, J.T.
Schward's ergodic theorems are valid for
 the Banach~---Kantorovich spaces $L_p(\nabla,m)$. In \cite{CGa} the $(o)$--convergence
of averages $s_n(T)(f)$  in the vector lattice
$L_p(\nabla,m)$ has  been established  for positive contractions
$T:L_p(\nabla,m)\rightarrow L_p(\nabla,m)$, satisfying the condition
$T\mathbf{1}\leq\mathbf{1}$, where
$1<p<\infty$. In \cite{ZC} similar ergodic theorem has been
extended for positive contractions of the Orlicz--Kantorovich
lattices $L_M(\nabla,m)$ in case when $N$-function $M$ satisfies
the condition $\sup\limits_{s\geq 1}\frac{\int\limits _1^s
M(t^{-1}s)dt}{M(s)}<\infty.$

In the present paper we establish ergodic theorems for  $L_1$--$L_\infty$ contractions in  Banach--Kantorovich
 lattices $L_p(\nabla,m)$ and $L_M(\nabla,m)$.  Moreover, we present
"vector" versions of weighted and multiparameter weighted  ergodic theorems obtained in
\cite{JO}.

 To prove these  "vector"  versions of ergodic theorems can be use methods of the Boolean
 valued analysis.
By means of these methods, the Banach--Kantorovich lattices and
the corresponding bounded homomorphisms can be interpreted as the
Banach lattices and bounded linear mappings within a suitable
Boolean valued model of the set theory \cite[ch. XI]{K3}. This
approach to the theory of Banach--Kantorovich lattices makes it
possible to apply the transfer principle \cite[4.4]{K3} in order to
obtain various properties of Banach--Kantorovich lattices, which are similar to the corresponding properties of classical
Banach lattices. Naturally, when using such a method, an
additional study is needed to establish the "required"
interrelation between the objects of $\mathbf{2}$-valued and
Boolean valued models of the set theory.

 Another
important approach in the study  of Banach--Kantorovich spaces is
provided by the theory of continuous and measurable Banach bundles
\cite{Ga3}, \cite{G2}. The representation of a
Banach--Kantorovich lattice as a space of measurable sections of
a measurable Banach bundle makes it possible to obtain the required
properties of the lattice by means of the corresponding stalkwise
verification of the properties. Using this approach, a version
of the dominated ergodic theorem was obtained for positive
contractions in the $L_p(\nabla,m)$ \cite{CGa}. This approach we use in the present article. We use the theorem
that a Banach--Kantorovich lattice $L_p(\nabla,m)$ can be
represented as a measurable bundle of $L_p$ spaces associated with
scalar measures \cite{Ga3}. Then we apply the representation and
the corresponding ergodic theorems for $L_1$--$L_\infty$ contractions
in $L_p$--spaces in order to obtain  versions of ergodic theorems for
$L_1$--$L_\infty$ contractions in Banach--Kantorovich
lattices $L_p(\nabla,m)$.

 We
use the terminology and notation of the theory of Boolean
algebras, Riesz spaces, vector integration, and lattice-normed
spaces \cite{K2}, as well as the terminology of measurable bundles
of Boolean algebras and Banach lattices \cite{Ga3},\cite{G2}.

\section{Preliminaries}
Let $(\Omega,\Sigma,\mu)$ be a  measure space with the
direct sum property \cite[1.1.8]{K2}. Denote by
$\mathcal{L}(\Omega)$ (respectively, $\mathcal{L}_\infty(\Omega)$ ) the set of
all (respectively, essentially bounded) measurable real functions
defined a.e. on $\Omega$. Introduce an equivalence relation on
$\mathcal{L}(\Omega)$ by setting $f \sim g \Leftrightarrow  f = g$
a.e. The set $L_0(\Omega)$ of all cosets $f^\sim = \{g \in
\mathcal{L}(\Omega): f \sim g\}$ endowed with the natural
algebraic operations is an algebra over
the field $\mathbb{R}$ of real numbers with the unity $\bf{1}(\omega)$ = $1$. Moreover, with respect to the
partial order $f^\sim\leq g^\sim\Leftrightarrow  f \leq g$ a.e.,
the algebra $L_0(\Omega)$ is an order complete vector lattice with
weak unity $\bf{1}$, and the set $B(\Omega) :=
B(\Omega,\Sigma,\mu)$ of all idempotents in $L_0(\Omega)$ is a
complete Boolean algebra. Furthermore, $L_\infty(\Omega) =
\{f^\sim: f\in \mathcal{L}_\infty(\Omega)\}$ is an order ideal in
$L_0(\Omega)$ generated by $\bf{1}$, in addition, $L_\infty(\Omega)$ is a commutative Banach  algebra with respect to the norm $\|f^\sim\|_{\infty} = vraisup |f(\omega)|$. In what follows, we
write $f\in L_0(\Omega)$ instead of $f^\sim\in L_0(\Omega)$
assuming that the coset of $f$ is considered.

A mapping $l : L_\infty(\Omega)\rightarrow
\mathcal{L}_\infty(\Omega)$ is called a lifting of
$L_\infty(\Omega)$ if for all $\alpha,\beta\in
\mathbb{R}$  and $f, g\in  L_\infty(\Omega)$ the following conditions hold:

 (a) $l(f) \in f^\sim$ and
 $dom (l(f))=\Omega$, where $dom (g)$ is the domain of $g\in\mathcal{L}_\infty(\Omega)$;

  (b) if $f\leqslant g$ then $ l(f)\leqslant l(g)$ everywhere on $\Omega $;

  (c) $l(\alpha f+\beta g)=\alpha l(f)+\beta l(g)$,
 $l(fg)= l(f)l(g)$, $l(f\lor g)=l(f)\lor l(g)$, $l(f\land g)=l( f)\land l(g)$;

 (d) $l(0)=0$ and $l(\bold 1)= 1$ everywhere on
  $\Omega$.

Since $(\Omega,\Sigma,\mu)$ has a direct sum property then there always exists
lifting $l : L_\infty(\Omega)\rightarrow
\mathcal{L}_\infty(\Omega)$ \cite[1.4.8]{K2}.

 Let $\nabla$  be an arbitrary complete Boolean algebra, let
$X(\nabla)$ be the Stone space of $\nabla$,  let $L_0(\nabla) :=
C_\infty(X(\nabla))$ be the algebra of all continuous functions $x
: X(\nabla)\rightarrow  [-\infty,+\infty]$ taking the values
$\pm\infty$ only on nowhere dense subsets of $X(\nabla)$, and let
$L_\infty(\nabla):= C(X(\nabla))$ be the subalgebra of all continuous real functions
on $X(\nabla)$. It is clear that $L_\infty(\nabla)$ is a commutative Banach  algebra with respect to the norm $\|x\|_{\infty} = sup \{|x(t)|: t \in X(\nabla)\}$.

Let $\nabla_0$ be a regular Boolean subalgebra in $\nabla$, i.e.
$\sup A$ and $\inf A$ of a subset $A\subset\nabla_0$ calculated in
$\nabla_0$ coincide with those in $\nabla$. It is clear that
 $L_0(\nabla_0)$
is identified with a subalgebra of $L_0(\nabla)$; moreover,
$L_0(\nabla_0)$ is a regular sublattice of $L_0(\nabla)$, i.e.
the supremum and infimum of subsets of $L_0(\nabla_0)$ calculated in
$L_0(\nabla_0)$ coincide with those in $L_0(\nabla)$.

Let $\nabla$ and $B$ be  complete Boolean algebras. A mapping $m
: {\nabla}\to L_0(B)$ is called an  $L_0(B)$-valued measure if
$(i)$.  $m(e)\geq 0$ for all $e\in{\nabla}$ and
$m(e)=0\Leftrightarrow e=0$;   $(ii).$ $m(e\vee g)=m(e)+m(g)$ if  $\ e\wedge g=0, e,g\in{\nabla}$; $(iii)$.
$m(e_\alpha)\downarrow 0$ for every net $e_\alpha\downarrow 0$.

For any $p\geq 1$, by $L_p(\nabla,m)$ we denote the $(bo)$-- complete
lattice normed space of all  elements $x$ from $L_0(\nabla)$, for which there exists $L_0(B)$-valued norm
 $\|x\|_{p}=\bigg(\int| x|^pdm\bigg)^{1/p}$ (see for example
 \cite[6.1]{K2}).

 An $L_0(B)$-valued measure $m$ is said  to be  disjunctive
 decomposable ($d$-decomposable), if
for every $e\in \nabla$ and a decomposition $m(e)=a_1+a_2, \quad
a_1\wedge a_2=0, \quad a_i\in L_0(B)$ there exists $e_i\in
\nabla$ such that $e=e_1\vee e_2$ and $m(e_i)=a_i, i=1,2.$

Let $\nabla_0$ be a regular Boolean subalgebra of $\nabla$ and
$\varphi: B\rightarrow \nabla_0$ be an isomorphism from $B$ onto
$\nabla_0.$ A $L_0(B)$-valued measure $m$  is called
$\varphi$--modular, if $m(\varphi(q)e)=qm(e)$ for all $q\in B,
e\in\nabla.$ The following criteria of $d$-decomposability of
measures is well-known.

\begin{theorem}\cite{Cz1}
\label{th2_1}
 Let $\nabla$ and $B$ be complete
Boolean algebras. For an $L_0(B)$-valued measure $m$ the following
conditions are
equivalent:\\
 $(i)$. The measure $m$ is $d$-decomposable;\\
 $(ii)$. There exist a regular Boolean subalgebra $\nabla_0$ and an
 isomorphism $\varphi: B\rightarrow \nabla_0$  such that measure
 $m$ is $\varphi$--modular.
\end{theorem}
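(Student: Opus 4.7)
The plan is to handle the two implications separately, with $(ii)\Rightarrow(i)$ being straightforward and $(i)\Rightarrow(ii)$ being the substantive direction.

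For $(ii)\Rightarrow(i)$, I would first note the structural fact that any two disjoint positives in $L_0(B)$ summing to $b$ must be of the form $qb$ and $(1-q)b$ for some $q\in B$ (namely the support of the first one). Applied to $m(e)=a_1+a_2$ this produces $q\in B$ with $a_1=qm(e)$, $a_2=(1-q)m(e)$. Setting $e_1:=e\wedge\varphi(q)$ and $e_2:=e\wedge\varphi(\neg q)$, $\varphi$-modularity immediately gives $m(e_i)=a_i$, while $e_1\vee e_2=e\wedge(\varphi(q)\vee\varphi(\neg q))=e$ because $\varphi$ is a Boolean isomorphism.

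For $(i)\Rightarrow(ii)$ the guiding idea is that $\varphi$ is forced by $m$. For each $q\in B$ I would apply $d$-decomposability to the splitting $m(\mathbf 1)=qm(\mathbf 1)+(1-q)m(\mathbf 1)$, producing $e_q,e_q'\in\nabla$ with $e_q\vee e_q'=\mathbf 1$ and prescribed measures. Strict positivity of $m$ combined with the modular identity $m(e_q)+m(e_q')=m(e_q\vee e_q')+m(e_q\wedge e_q')$ forces $e_q\wedge e_q'=0$, so the pair partitions $\mathbf 1$, and I would set $\varphi(q):=e_q$. To justify well-definedness and simultaneously obtain $\varphi$-modularity for arbitrary $e$, I would prove the key lemma: \emph{if $f\le e$ and $m(f)=qm(e)$, then $f=e\wedge\varphi(q)$.} This would follow by $d$-decomposing $e$ along $m(e)=qm(e)+(1-q)m(e)$ and observing that $m(f\wedge\varphi(\neg q))$ is dominated by two positives with disjoint supports in $L_0(B)$ and hence vanishes; strict positivity then gives $f\le\varphi(q)$, and matching of measures closes the argument. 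Taking $e=\mathbf 1$ yields uniqueness of $\varphi(q)$, and taking $e$ arbitrary yields $m(e\wedge\varphi(q))=qm(e)$.

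The main obstacle will be verifying that $\varphi$ is a Boolean isomorphism from $B$ onto a \emph{regular} subalgebra $\nabla_0\subset\nabla$. Monotonicity, preservation of finite joins, meets and complements, and injectivity should all be amenable to the same template---compute the relevant measures via $\varphi$-modularity and then invoke strict positivity of $m$---but the bookkeeping is nontrivial, and injectivity requires using that $m(\mathbf 1)$ is a weak order unit of $L_0(B)$. For regularity of $\nabla_0$ in $\nabla$ I would exploit axiom $(iii)$ of the definition of measure: given $q_\alpha\downarrow 0$ in $B$, the elements $q_\alpha m(\mathbf 1)$ decrease to $0$ in $L_0(B)$, and the key lemma together with strict positivity permits one to conclude $\varphi(q_\alpha)\downarrow 0$ in $\nabla$, which is exactly the statement that infima (and by complementation, suprema) computed in $\nabla_0$ agree with those computed in $\nabla$.
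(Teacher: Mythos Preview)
The paper does not prove Theorem~\ref{th2_1}; it is quoted from \cite{Cz1} as a known result, with no argument supplied in the present paper. Consequently there is no ``paper's own proof'' against which to compare your proposal.

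For what it is worth, your outline is a reasonable route to the result. The one point that deserves care is the one you yourself flag: injectivity of $\varphi$ (and hence the very possibility of an isomorphism $B\to\nabla_0$) hinges on $m(\mathbf 1)$ being a weak order unit of $L_0(B)$. This is not a consequence of the measure axioms $(i)$--$(iii)$ as listed here, and without it the implication $(i)\Rightarrow(ii)$ in the literal formulation can fail. The present paper avoids the issue later by normalizing to $m(\mathbf 1)=\mathbf 1$; in a self-contained write-up you would need either to add this hypothesis or to pass to the band generated by $m(\mathbf 1)$ before constructing $\varphi$. The cited source \cite{Cz1} presumably makes this explicit.
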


In case of $d$-decomposable measure $m$, the lattice normed space $(L_p(\nabla,m), \|\cdot\|_p)$
 has  additional properties.
 Let $\nabla_0$ and $B$ be the same as in Theorem \ref{th2_1} and let $\psi$ be an
 isomorphism from $L_0(B)$ onto $L_0(\nabla_0)$ with the restriction $\psi\mid_{B}$
 to the  Boolean algebra $B$ of idempotents in $L_0(B)$
 coinciding with $\varphi.$ By setting $\alpha\cdot
 x=\psi(\alpha)x$, we define multiplication $\alpha\cdot
 x$ for $\alpha\in L_0(B), x\in L_0(\nabla)$. It is clear
 that $L_0(\nabla)$ is a $L_0(B)$ module.
 In the next theorem some basic properties  of the space $L_p(\nabla,m)$
  are stated \cite{Cz}.

\begin{theorem}\label{th2_2}
 $(i)$ $(L_p(\nabla,m), \|\cdot\|_p)$ is a
$(bo)$--complete lattice normed ideal of
$L_0(\nabla)$, i.e. the condition $|x|\leq |y|$, $y\in L_p(\nabla,m)$,
$x\in L_0(\nabla)$ implies that $x\in L_p(\nabla,m)$ and
$\|x\|_p\leq\|y\|_p$;\\
$(ii)$ If $0\leq x_\alpha\in L_p(\nabla,m)$ and
$x_\alpha\downarrow 0$, then $\| x_\alpha\|_p\downarrow 0$;\\
$(iii)$ If the measure $m$ is $d$-decomposable,  $\nabla_0$ and
$\varphi$ are the same as in Theorem \ref{th2_1} and $\psi$ is an
 isomorphism from $L_0(B)$ onto $L_0(\nabla_0)$, such that
 $\psi\mid_B=\varphi$, then $L_0(\nabla_0)\cdot L_p(\nabla,m)\subset L_p(\nabla,m),$
 moreover  $\| \psi(\alpha)x\|_p=\alpha\| x\|_p$ for
 all $\alpha\in L_0(B), x\in L_p(\nabla,m)$; \, in addition,  $(L_p(\nabla,m), \|\cdot\|_p)$
 is a Banach--Kantorovich space;\\
 $(iv)$ $L_\infty(\nabla) \subset L_p(\nabla,m)\subset
 L_q(\nabla,m)$, $1\leq q\leq p,$ moreover $L_\infty(\nabla)$
 is  $(bo)$--dense in $(L_1(\nabla,m), \|\cdot\|_1).$
\end{theorem}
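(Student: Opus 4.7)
The plan is to mimic the classical scalar $L_p$-theory while replacing pointwise integration by the $L_0(B)$-valued integral $\int\cdot\,dm$ and using its monotone convergence and order-continuity properties.

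For part (i), the ideal property is immediate: $|x|\leq|y|$ implies $|x|^p\leq|y|^p$ in $L_0(\nabla)$, and since $\int\cdot\,dm$ is order-preserving with values in $L_0(B)$, the element $\int|x|^p\,dm$ exists in $L_0(B)$, whence $x\in L_p(\nabla,m)$ and $\|x\|_p\leq\|y\|_p$. For $(bo)$-completeness I would take a $(bo)$-Cauchy net $\{x_\alpha\}$, exploit the order-completeness of $L_0(\nabla)$ to produce an $(o)$-limit $x\in L_0(\nabla)$, and then apply the Fatou-type monotone convergence property of the vector measure to conclude $\int|x-x_\alpha|^p\,dm\to 0$ in $L_0(B)$, so $x\in L_p(\nabla,m)$ and $\|x-x_\alpha\|_p\to 0$ in order.

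Part (ii) is a direct consequence of the order-continuity of $m$: if $0\leq x_\alpha\downarrow 0$ in $L_p(\nabla,m)$, then $x_\alpha^p\downarrow 0$ in $L_0(\nabla)$, and by condition (iii) in the definition of an $L_0(B)$-valued measure the integral $\int x_\alpha^p\,dm\downarrow 0$, hence $\|x_\alpha\|_p\downarrow 0$. For part (iii), I would first verify the identity $\int|\psi(q)x|^p\,dm=q\int|x|^p\,dm$ for characteristic idempotents $q\in B$, using directly the $\varphi$-modularity $m(\varphi(q)e)=qm(e)$ together with the fact that $\psi|_B=\varphi$ and that $\psi(q)\in\nabla_0$ acts on simple elements by multiplication by $\varphi(q)$. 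Extending linearly to $B$-step functions and then upward by monotone convergence gives $\int|\psi(\alpha)x|^p\,dm=\alpha^p\int|x|^p\,dm$ for every $0\leq\alpha\in L_0(B)$. Taking $p$-th roots yields the module norm identity; combined with the $(bo)$-completeness from (i), this establishes that $(L_p(\nabla,m),\|\cdot\|_p)$ is a Banach--Kantorovich space.

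For part (iv), the inclusion $L_\infty(\nabla)\subset L_p(\nabla,m)$ follows because $|x|\leq\|x\|_\infty\mathbf{1}$ gives $\int|x|^p\,dm\leq\|x\|_\infty^p\,m(\mathbf{1})\in L_0(B)$. The inclusion $L_p\subset L_q$ for $q\leq p$ I would obtain through a Hölder-type inequality for the $L_0(B)$-valued integral, derived stalkwise (or verified on $B$-simple functions and extended by monotone convergence), producing $\bigl(\int|x|^q\,dm\bigr)^{1/q}\leq\bigl(\int|x|^p\,dm\bigr)^{1/p}\bigl(m(\mathbf{1})\bigr)^{1/q-1/p}$. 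Finally, $(bo)$-density of $L_\infty(\nabla)$ in $L_1(\nabla,m)$ is obtained by the truncations $x_n=(|x|\wedge n\mathbf{1})\mathrm{sgn}(x)\in L_\infty(\nabla)$: one has $|x-x_n|\downarrow 0$, and part (ii) gives $\|x-x_n\|_1\downarrow 0$.

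The main obstacle is justifying the integral manipulations --- in particular the monotone convergence theorem and the Hölder inequality in the $L_0(B)$-valued setting. Once these are in place, each of the four assertions reduces to a routine adaptation of the classical proof; the remaining technical point is ensuring that the module action $\alpha\cdot x=\psi(\alpha)x$ is compatible with the integral via the $\varphi$-modularity, which is where the $d$-decomposability hypothesis in (iii) is used essentially.
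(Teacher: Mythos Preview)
The paper does not give its own proof of this theorem: it is stated with a reference to \cite{Cz} (Chilin--Zakirov, \emph{J.\ Operator Theory} 2012), and no argument appears in the text. So there is nothing in the paper to compare your proposal against directly.

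Your outline is the natural one and is almost certainly how the cited source proceeds: transport the scalar $L_p$ arguments to the $L_0(B)$-valued integral, using monotone convergence, Fatou, and H\"older for that integral, together with $\varphi$-modularity for the module identity in (iii). The one place I would tighten is your justification of (ii): you invoke ``condition (iii) in the definition of an $L_0(B)$-valued measure'' to pass from $x_\alpha^p\downarrow 0$ to $\int x_\alpha^p\,dm\downarrow 0$, but that condition gives order-continuity of $m$ on idempotents, not of the integral on positive elements of $L_0(\nabla)$. Bridging this gap is precisely the monotone-convergence/order-continuity theorem for the vector integral that you flag as the main obstacle at the end, so (ii) is not a ``direct consequence'' of the definition but rather another instance of the same missing ingredient. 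The same applies to the Fatou step in your $(bo)$-completeness argument in (i). Once those integral tools are established (as they are in the literature you would be citing), the rest of your plan is sound.
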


Now we mention  some necessary facts  from the theory of
measurable bundles of Boolean algebras and of Banach spaces.

 Let $X$ be a Banach bundle over $\Omega$, i.e., a mapping $\omega\mapsto X(\omega)$ from $\Omega$ into the class of Banach
spaces over field $\mathbb{R}$ of real numbers. Denote by
$S_\sim(\Omega,X )$ the set of all sections of $X$ that are
defined a.e. in $\Omega$. Let $ \mathcal{L}\subset S_\sim(\Omega,X
)$ be a measurable structure in $X$, i.e.,

(a) $\alpha_1 c_1+\alpha_2 c_2\in \mathcal{L}$ for all $\alpha_1,
\alpha_2\in \mathbb{R}$  and $c_1, c_2\in \mathcal{L}$;

 (b) the pointwise norm $|||c||| : \Omega\rightarrow R$ of each element $c\in\mathcal{L}$  is
measurable, where $|||c|||(\omega) = \|c(\omega)\|_{ X(\omega)}$;

(c) the set $\mathcal{L}$ is stalkwise dense in $X$, i.e.
$\{c(\omega) : c\in\mathcal{L}\}$ is dense in $X(\omega)$ for each
$\omega\in dom(c)$,  where  $dom(c)$ is the domain of $c$.

The pair $(X , \mathcal{L})$ is called a measurable Banach
bundle over $\Omega$. Denote by $\mathcal{M}(\Omega, X)$ the set
of all $\mathcal{L}$--measurable sections of the bundle $X$ and
let $L_0(\Omega, X)$  be the quotient of $\mathcal{M}(\Omega, X)$
by the equality almost everywhere. It is known that, under the
natural algebraic operations, the set $L_0(\Omega, X)$ endowed with the
norm $\|x^\sim\| := |||x|||^\sim\in L_0(\Omega), x^\sim\in
L_0(\Omega, X) $ is a Banach--Kantorovich space over
$L_0(\Omega)$ \cite{G2}.

A measurable Banach bundle $(X , \mathcal{L})$ over $\Omega$ is
called a measurable bundle of Banach lattices over $\Omega$, if
$X(\omega)$ is a Banach lattice for each $\omega\in\Omega$ and
$c_1\vee c_2 \in \mathcal{L}$ for all $c_1, c_2\in\mathcal{L}$. In
this case $L_0(\Omega, X)$, endowed with the natural partial order
$x^\sim \leq y^\sim \Leftrightarrow x(\omega)\leq y(\omega)$ for
almost all $\omega\in\Omega$, is a Banach--Kantorovich lattice
over $L_0(\Omega)$ \cite{Ga3}.

Let $(\Omega,\Sigma,\mu)$ be a  measure space with the
direct sum property. For given $f\in L_0(\Omega)$ denote by $s(f)$
the support of $f$, i.e. $s(f)=\mathbf{1}-\sup\{e\in
B(\Omega):fe=0\}$.
Let $\nabla$  be an arbitrary complete Boolean algebra and let $m$ be a
$d$-decomposable $L_0(\Omega)$-valued measure on $\nabla$.  By theorem \ref{th2_1} we can assume, that $B(\Omega)$ is a
regular subalgebra of $\nabla$ and $m$ is modular, i.e.
$m(qe)=qm(e)$ for all $q\in B(\Omega), e\in\nabla.$

 If $q=s(m(\mathbf{1}))$, then
 $m(\mathbf{1} - q)= m(\mathbf{1})-qm(\mathbf{1})=0$, i.e. $q=\mathbf{1}$.  Hence
 the element $m(\mathbf{1})$ is
invertible in $L_0(\Omega)$. It is clear that
  $m_1(e) = m(e)m(\mathbf{1})^{-1}$ is an $L_0(\Omega)$-valued measure on
 $\nabla$,
  for which we have
$m_1(\mathbf{1}) =  \mathbf{1}$. From now on we assume that $m(\mathbf{1}) =  \mathbf{1}$.

Let $l : L_\infty(\Omega)\rightarrow \mathcal{L}_\infty(\Omega)$
be a lifting. Define some scalar quasimeasure on $\nabla$ by the
equality $ m^0_\omega(e) = l(m(e))(\omega)$. For each
$\omega\in\Omega$, consider the ideal $I^0_\omega = \{e\in\nabla :
m^0_\omega(e) = 0\}$ in $\nabla$ and denote by $\nabla^0_\omega$  the
quotient Boolean algebra $\nabla/I^0_\omega$. It is clear that
$\nabla^0_\omega$ is a Boolean algebra and $m^0_\omega([e])
=m^0_\omega(e)$ is a strictly positive quasimeasure on
$\nabla^0_\omega$, where $[e]$ is the coset in $\nabla^0_\omega$
of $e\in\nabla$. Consider the metric $\rho_\omega([e], [g]) =
m^0_\omega(e\triangle g)$ in $\nabla^0_\omega$ and denote by
$\nabla_\omega$ the completion of the metric space
$(\nabla^0_\omega, \rho_\omega)$. It is known that $\nabla_\omega$
is a complete Boolean algebra with a strictly positive scalar
measure $m_\omega$ extending $m^0_\omega$  \cite[ch.III, \S 5]{Vl}.

Let $\pi_\omega : {\nabla}\rightarrow\nabla_\omega^0$ be the
quotient homomorphism, let $i_\omega :
\nabla_\omega^0\rightarrow\nabla_\omega$ be the natural embedding,
and let us set $\gamma_\omega = i_\omega\circ\pi_\omega$. Then
$\gamma_\omega$ is a homomorphism of the Boolean algebra $\nabla$
into the Boolean algebra $\nabla_\omega$ for each
$\omega\in\Omega$. Consider the bundle $X$  over $\Omega$ such
that $ X(\omega) = (\nabla_\omega,m_\omega)$ for all
$\omega\in\Omega$ and put $\mathcal{L} = \{e(\omega)
=\gamma_\omega(e) : e\in  \nabla\}$. It is known (see \cite{Ga3})
that $(X,\mathcal{L})$ is a measurable bundle of Boolean algebras
over $\Omega$ such that the Boolean algebras $\nabla$ and
$L_0(\Omega, X)$ are isometrically isomorphic; moreover,
$l(m(e))(\omega) = m_\omega(\gamma_\omega(e))$ for all $e\in
\nabla$  and $\omega\in\Omega$.

Consider the classical $L_p$--lattice
$L_p(\nabla_\omega,m_\omega)$ and the Banach~---Kantorovich
lattice $L_p(\nabla,m), p\geq1$. Every idempotent $e\in\nabla$
generate bundle $e(\omega)\in\nabla_\omega\subset
L_p(\nabla_\omega,m_\omega)$ by the equality
$e(\omega)=\gamma_\omega(e).$  Let $({Y}_p , \mathcal{E})$ be
Banach bundle over $\Omega$ such that ${Y}_p(\omega) =
L_p(\nabla_\omega,m_\omega).$   In \cite{Ga3} it is shown that
$$\mathcal{E}=\bigg\{\sum\limits_{i=1}^n \lambda_i
\gamma_\omega(e_i) : \lambda_i\in \mathbb{R},\ e_i\in\nabla,\
i=1,\ldots,n,\
 n\in\mathbb{N}\bigg\}$$ is a measurable structure in $Y_p$ and
 there exists  isometric isomorphism $\Phi$ from
 $L_p(\nabla,m)$
onto  the Banach--Kantorovich lattice $L_0(\Omega, Y_p)$ such that
$$\Phi\left(\sum\limits_{i=1}^n \lambda_i e_i\right)=\left(\sum\limits_{i=1}^n \lambda_i
\gamma_\omega(e_i)\right)^\sim$$ for all $\lambda_i\in
\mathbb{R},\ e_i\in\nabla, i=1,\ldots,n,\
 n\in\mathbb{N}.$ Thus, every element $x\in L_p(\nabla,m)$ is
 identified with the bundle $\Phi(x)\in L_0(\Omega, Y_p)$, where
 $\Phi(x)(\omega)\in L_p(\nabla_\omega,m_\omega)$ a.e..

Set
$$\mathcal{L}_\infty(\Omega, Y_p)=\{u\in \mathcal{M}(\Omega, Y_p):
|||u|||\in \mathcal{L}_\infty(\Omega)\}$$
  and
$$L_\infty(\Omega,
Y_p)=\{{u}^\sim: u \in \mathcal{L}_\infty(\Omega,Y_p)\}.$$ Let $l :
L_\infty(\Omega)\rightarrow \mathcal{L}_\infty(\Omega)$ be a
lifting. In \cite{Ga3} it is  proved that there exists a linear
mapping $\ell_\nabla: L_\infty(\nabla,m)\rightarrow
\mathcal{L}_\infty(\Omega,Y_p)$ such that for all $x,y\in
L_\infty(\nabla,m), h\in L_\infty(B(\Omega))$ the following properties  hold \\
\textit{\indent 1) $\ell_\nabla(x)\in \Phi(x),\ {\rm dom}~\ell_\nabla(x) = \Omega$;}\\
 \textit{\indent 2)
 $\|\ell_\nabla(x)(\omega)\|_{L_p(\nabla_\omega,m_\omega)}=
 l(\|x\|_p)(\omega)$ (we see that the equality $m(\mathbf{1})=\mathbf{1}$ implies that $\|x\|_p\in L_\infty(\Omega)$ for all $x\in
L_\infty(\nabla,m)$);}\\
 \textit{\indent 3) $\ell_\nabla(x)(\omega)\geq 0$ if $x\geq 0$;}\\
 \textit{\indent 4) $\ell_\nabla(hx) = l(h)\ell_\nabla(x)$;}\\
\textit{ \indent 5) $\{\ell_\nabla(x)(\omega) : x\in L_\infty(\nabla,m)\}$ is dense in  $L_p(\nabla_\omega,m_\omega), \omega\in\Omega$;}\\
 \textit{\indent 6)
 $\ell_\nabla(x\vee y)=\ell_\nabla(x)\vee\ell_\nabla(y)$.}\par

A mapping
$\ell_\nabla: L_\infty(\nabla,m)\rightarrow
\mathcal{L}_\infty(\Omega,Y_p)$ is called  \textit{ a vector
valued lifting}   associated with the lifting $l :
L_\infty(\Omega)\rightarrow \mathcal{L}_\infty(\Omega)$.

Let $(E, \|\cdot\|_E)$ be a Banach--Kantorovich lattice over
$L_0(\Omega)$.
 A linear mapping $T : E\rightarrow E$  is called
\begin{description}
\item[---]
positive, if $Tx \geq 0$ for all $x\geq 0$;
\item[---]
$L_0(\Omega)$-bounded, if there exists $0\leq c\in L_0(\Omega)$
such that $\|Tx\|_E\leq c\|x\|_E$ for all $x\in E$ (in this case
we set $\|T\| := \|T\|_{E\rightarrow E} = \sup\{\|Tx\|_E :
\|x\|_E\leq \mathbf{1}\}$).
\end{description}

In \cite[2.1.8]{K2} it is shown that in a Banach--Kantorovich
lattice $(E, \|\cdot\|_E)$ over $L_0(\Omega)$ can be  defined
multiplication operation $\alpha x, \alpha\in L_0(\Omega), x\in E$,
such that $E$ become a  Banach $L_0(\Omega)$--module, with an
additional property $\|\alpha x\|_E=|\alpha| \|x\|_E$, moreover
any $L_0(\Omega)$--bounded linear mapping $T : E\rightarrow E$
become an $L_0(\Omega)$--linear, i.e. $T(\alpha x)=\alpha T(x)$ for
all $\alpha\in L_0(\Omega), x\in E$ \cite[5.1.9]{K2}.

 In that case, when $E=L_p(\nabla,m), \|\cdot\|_E=\|\cdot\|_p,$ \  $p\geq1,$
 and $m:\nabla\rightarrow L_0(\Omega)$ is a $d$--decomposable
 measure, the  equality $\| \varphi(\alpha) x\|_p=\alpha\| x\|_p$ hold for
 all $\alpha\in L_0(\Omega), x \in L_p(\nabla,m)$ (see Theorem
 \ref{th2_2}(iii)), and therefore the $L_0(\Omega)$--linearity of an
 $L_0(\Omega)$--bounded linear mapping $T : L_p(\nabla,m)\rightarrow
L_p(\nabla,m)$ means that
$T(\varphi(\alpha) x)=\alpha T(x)$ for
 all $\alpha\in L_0(\Omega), x \in L_p(\nabla,m)$.
 Since we have identified the Boolean algebra $B(\Omega)$ with regular
 subalgebra of $\nabla$ and we assume that $m(qe)=qm(e)$   for
 all $q\in B(\Omega),  e\in\nabla$, it follows that $T : L_p(\nabla,m)\rightarrow L_p(\nabla,m)$ is
 $L_0(\Omega)$--linear if and only if
$T(\alpha x)=\alpha T(x)$ for
 all $\alpha\in L_0(B(\Omega)), x \in L_p(\nabla,m)$.

Further we will need the following stalkwise representation
of  $L_0(\Omega)$--bounded linear operators acting
in Banach~---Kantorovich lattices $L_p(\nabla,m)$.

 Let $T: L_1(\nabla,m)\rightarrow
 L_1(\nabla,m)$ be an  $L_0(\Omega)$-- bounded linear operator, $\|T\|_{L_1(\nabla,m)\rightarrow
L_1(\nabla,m)}\leq\mathbf{1}$, $T(L_\infty(\nabla,m))\subset
L_\infty(\nabla,m)$ and let $\ell_\nabla:
L_\infty(\nabla,m)\rightarrow \mathcal{L}_\infty(\Omega,Y_1)$ be the
vector valued lifting   associated with a lifting $l :
L_\infty(\Omega)\rightarrow \mathcal{L}_\infty(\Omega)$. We define a mapping
 $\varphi(\omega)$ from  $\{\ell_\nabla(x)(\omega) : x\in
 L_\infty(\nabla,m)\}$ into
 $L_1(\nabla_\omega,m_\omega)$ by the equality  $\varphi(\omega)
 (\ell_\nabla(x)(\omega))=\ell_\nabla(Tx)(\omega), \omega\in\Omega$. By $\|Tx\|_1\leq
 \|x\|_1$ we have that
 $$\|\ell_\nabla(Tx)(\omega)\|_{L_1(\nabla_\omega,m_\omega)}=
 l(\|Tx\|_1)(\omega)\leq l(\|x\|_1)(\omega)
 = \|\ell_\nabla(x)(\omega)\|_{L_1(\nabla_\omega,m_\omega)},$$
and therefore the operator $\varphi(\omega)$ is well defined   and
  bounded with respect to the norm  $\|\cdot\|_{L_1(\nabla_\omega,m_\omega)}$.
 Since $\{\ell_\nabla(x)(\omega) :
 x\in L_\infty(\nabla,m)\}$ is dense in
 $L_1(\nabla_\omega,m_\omega)$, then the linear operator
 $\varphi(\omega)$ can  extended  to the  contraction  $T_\omega : L_1(\nabla_\omega,m_\omega) \rightarrow L_1(\nabla_\omega,m_\omega)$.

 We shall show that $\Phi(Tx)(\omega)=T_\omega(\Phi(x)(\omega))$
 for a.e. $\omega\in\Omega$, where $x\in L_1(\nabla,m)$. Choose $\{x_n\}\in L_\infty(\nabla,m)$ such
 that the sequence$\|x_n-x\|_1$\, $(o)$-converges to zero.
Then
$\|\Phi(x_n)(\omega)-\Phi(x)(\omega)\|_{L_1(\nabla_\omega,m_\omega)}\rightarrow
0$ for a.e. $\omega\in\Omega$. Since
$\|Tx_n-Tx\|_1\stackrel{(o)}\rightarrow  0$, it follows that
$\|\ell_\nabla(Tx_n)(\omega)-\Phi(Tx)(\omega)\|_{L_1(\nabla_\omega,m_\omega)}\rightarrow
0$ for a.e. $\omega\in\Omega$. Moreover, the continuity of the
operator $T_\omega$ implies that
$\|\ell_\nabla(Tx_n)(\omega)-T_\omega(\Phi(x))(\omega)\|_{L_1(\nabla_\omega,m_\omega)}\rightarrow
0$ for a.e. $\omega\in\Omega$. Hence
$\Phi(Tx)(\omega)=T_\omega(\Phi(x)(\omega))$
 for a.e. $\omega\in\Omega$. It is clear that for the positive
 operator $T$, by propertie 3) of the vector valued lifting
$\ell_\nabla$, the operator $T_\omega$ is also positive.

Thus we obtain the following theorem.
\begin{theorem}\label{th2_3}
Let $T: L_1(\nabla,m)\rightarrow
 L_1(\nabla,m)$ be an $L_0(\Omega)$-- bounded linear operator, $\|T\|_{L_1(\nabla,m)\rightarrow
L_1(\nabla,m)}\leq\mathbf{1}$, $T(L_\infty(\nabla,m))\subset
L_\infty(\nabla,m)$. Then for every $\omega\in\Omega$ there exists a
contraction $T_\omega:L_1(\nabla_\omega,m_\omega)\rightarrow
L_1(\nabla_\omega,m_\omega)$ such that
$\Phi(Tx)(\omega)=T_\omega(\Phi(x)(\omega))$
 for a.e. $\omega\in\Omega$ and for every $x\in L_1(\nabla,m)$.
 In addition, if the
 operator $T$ is positive, then the operator $T_\omega$ is also positive for every $\omega\in\Omega$.
\end{theorem}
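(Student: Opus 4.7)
The plan is to use the vector-valued lifting $\ell_\nabla$ to construct $T_\omega$ stalkwise on a dense subset of $L_1(\nabla_\omega,m_\omega)$ and then extend by continuity, exactly exploiting property~2) of $\ell_\nabla$ (that the norm of a lifted section at $\omega$ equals the lifted scalar norm) to transfer the contraction bound of $T$ into a contraction bound for $T_\omega$.

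First I would fix $\omega\in\Omega$ and define, for $x\in L_\infty(\nabla,m)$,
$$\varphi(\omega)\big(\ell_\nabla(x)(\omega)\big):=\ell_\nabla(Tx)(\omega).$$
The main thing to check here is well-definedness on the image set $\mathcal D_\omega:=\{\ell_\nabla(x)(\omega):x\in L_\infty(\nabla,m)\}$: if $\ell_\nabla(x)(\omega)=\ell_\nabla(y)(\omega)$ for $x,y\in L_\infty(\nabla,m)$, then by property~2) we have $l(\|x-y\|_1)(\omega)=\|\ell_\nabla(x-y)(\omega)\|_{L_1(\nabla_\omega,m_\omega)}=0$, and since $\|T(x-y)\|_1\le\|x-y\|_1$ and $l$ is monotone, $l(\|T(x-y)\|_1)(\omega)=0$, so $\ell_\nabla(T(x-y))(\omega)=0$ and the definition is consistent. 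The same computation shows $\|\varphi(\omega)u\|_{L_1(\nabla_\omega,m_\omega)}\le\|u\|_{L_1(\nabla_\omega,m_\omega)}$ for every $u\in\mathcal D_\omega$. Since $\mathcal D_\omega$ is dense in $L_1(\nabla_\omega,m_\omega)$ by property~5), $\varphi(\omega)$ extends uniquely to a contraction $T_\omega:L_1(\nabla_\omega,m_\omega)\to L_1(\nabla_\omega,m_\omega)$.

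Next I would verify the intertwining identity $\Phi(Tx)(\omega)=T_\omega(\Phi(x)(\omega))$ for $x\in L_1(\nabla,m)$. By Theorem~\ref{th2_2}(iv), $L_\infty(\nabla)$ is $(bo)$-dense in $L_1(\nabla,m)$, so we may pick $\{x_n\}\subset L_\infty(\nabla,m)$ with $\|x_n-x\|_1\stackrel{(o)}\to 0$ in $L_0(\Omega)$. Choosing a dominating sequence $a_n\downarrow 0$ with $\|x_n-x\|_1\le a_n$, monotonicity of the lifting (and the fact that $\|\Phi(\,\cdot\,)(\omega)\|_{L_1(\nabla_\omega,m_\omega)}$ agrees with $\|\,\cdot\,\|_1$ pointwise a.e.) gives $\|\Phi(x_n)(\omega)-\Phi(x)(\omega)\|_{L_1(\nabla_\omega,m_\omega)}\to 0$ a.e., and similarly $\|\Phi(Tx_n)(\omega)-\Phi(Tx)(\omega)\|_{L_1(\nabla_\omega,m_\omega)}\to 0$ a.e. since $\|Tx_n-Tx\|_1\stackrel{(o)}\to 0$. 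For each $n$, by construction $\Phi(Tx_n)(\omega)=\ell_\nabla(Tx_n)(\omega)^\sim=T_\omega(\ell_\nabla(x_n)(\omega))^\sim=T_\omega(\Phi(x_n)(\omega))$ a.e., and then the continuity of $T_\omega$ passes to the limit to yield the desired equality a.e.

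Finally, positivity of $T_\omega$ when $T\ge 0$ is immediate from property~3) of the vector-valued lifting: $\ell_\nabla(Tx)(\omega)\ge 0$ whenever $x\ge 0$, so $T_\omega$ is positive on the positive cone of $\mathcal D_\omega$, which is dense in the positive cone of $L_1(\nabla_\omega,m_\omega)$; positivity is then preserved by the continuous extension. I expect the only nontrivial obstacle to be the well-definedness of $\varphi(\omega)$ (since a priori $\ell_\nabla$ is not pointwise injective); this is handled precisely by the contractivity of $T$ combined with property~2), as shown above.
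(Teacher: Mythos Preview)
Your proposal is correct and follows essentially the same route as the paper: define $\varphi(\omega)$ on $\{\ell_\nabla(x)(\omega):x\in L_\infty(\nabla,m)\}$ via $\ell_\nabla(Tx)(\omega)$, use property~2) of the vector-valued lifting together with $\|Tx\|_1\le\|x\|_1$ to get well-definedness and the contraction bound, extend by density (property~5)) to $T_\omega$, pass to general $x\in L_1(\nabla,m)$ by $(bo)$-approximation from $L_\infty(\nabla,m)$, and deduce positivity from property~3). Your treatment of well-definedness is in fact more explicit than the paper's, which simply records the norm inequality and declares $\varphi(\omega)$ well defined.
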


 A linear operator  $T : L_1(\nabla,m)\rightarrow L_1(\nabla,m)$ is
said to be  regular if it can be represented as a
difference of two positive operators. The set of all regular
operators on $L_1(\nabla,m)$ is denoted by $H_r(L_1(\nabla,m))$.
It is known that $H_r(L_1(\nabla,m))$ forms an order complete vector
lattice, in addition for every  $T\in H_r(L_1(\nabla,m))$ the
module $|T|$ is a positive linear operator and

$$|T|(x)=\sup\{|T{y}|: {y}\in L_1(\nabla,m),
|{y}|\leq {x}\ \}$$ where $0\leq x\in L_1(\nabla,m)$ \cite[3.1.2]{K2}. In addition $$|T{x}|\leq |T||{x}|$$ for all ${x}\in
L_1(\nabla,m).$

The set of all
$L_0(\Omega)$-- bounded linear operators acting in the Banach-- Kantorovich
lattice $L_1(\nabla,m)$ we denote by $B(L_1(\nabla,m))$. With
respect to the $L_0(\Omega)$--valued norm
$\|\cdot\|_{L_1(\nabla,m)\rightarrow L_1(\nabla,m)}$
this spase is a Banach--Kantorovich space \cite[4.2.6]{K2}.

We need the following  property of regularity for operators $T\in
B(L_1(\nabla,m))$.

\begin{claim}\label{pr2_4} $B(L_1(\nabla,m))\subset
H_r(L_1(\nabla,m)).$
\end{claim}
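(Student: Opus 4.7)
The plan is to use the stalkwise (measurable bundle) representation in order to reduce the claim to the classical fact that every bounded operator on an $L_1$-space is regular, and then to glue the stalkwise moduli together into a modulus for $T$ itself.

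First, for $T\in B(L_1(\nabla,m))$ with $L_0(\Omega)$-bound $c=\|T\|$, I would construct a measurable family $\{T_\omega\}$ of bounded operators on $L_1(\nabla_\omega,m_\omega)$ satisfying $\Phi(Tx)(\omega)=T_\omega(\Phi(x)(\omega))$ a.e.\ for every $x\in L_1(\nabla,m)$. The construction parallels that preceding Theorem \ref{th2_3}: define $T_\omega$ on the dense subspace $\{\ell_\nabla(x)(\omega):x\in L_\infty(\nabla,m)\}\subset L_1(\nabla_\omega,m_\omega)$ by $T_\omega(\ell_\nabla(x)(\omega))=\Phi(Tx)(\omega)$. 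The two hypotheses in Theorem \ref{th2_3}, namely $\|T\|\leq\mathbf{1}$ and $T(L_\infty(\nabla,m))\subset L_\infty(\nabla,m)$, are not essential here: the right-hand side is defined via $\Phi$ even when $Tx\notin L_\infty(\nabla,m)$, and from $\|Tx\|_1\leq c\|x\|_1$ one obtains $\|\Phi(Tx)(\omega)\|\leq l(c)(\omega)\|\ell_\nabla(x)(\omega)\|$ a.e.\ (first on a countable $(bo)$-dense subset of $L_\infty(\nabla,m)$, then everywhere by continuity), so $T_\omega$ extends by density to a bounded linear operator of norm at most $l(c)(\omega)$. The identity $\Phi(Tx)(\omega)=T_\omega(\Phi(x)(\omega))$ for all $x\in L_1(\nabla,m)$ is then verified by the $(bo)$-density of $L_\infty(\nabla,m)$ in $L_1(\nabla,m)$ from Theorem \ref{th2_2}(iv) together with continuity, exactly as in the argument preceding Theorem \ref{th2_3}.

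Next, since each $L_1(\nabla_\omega,m_\omega)$ is a classical AL-space, the standard theorem yields that $T_\omega$ is regular with modulus $|T_\omega|(u)=\sup\{|T_\omega v|:|v|\leq u\}$ for $u\geq 0$, satisfying $\||T_\omega|\|=\|T_\omega\|\leq l(c)(\omega)$. I would then define a candidate positive operator $S:L_1(\nabla,m)\to L_1(\nabla,m)$ by the stalkwise rule $\Phi(Sx)(\omega)=|T_\omega|(\Phi(x)(\omega))$ for $x\geq 0$ and extend it linearly. Since $|T_\omega|\pm T_\omega\geq 0$ stalkwise, we obtain $S\pm T\geq 0$, and hence $T=\tfrac12(S+T)-\tfrac12(S-T)$ exhibits $T$ as a difference of two positive operators in $H_r(L_1(\nabla,m))$.

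The main obstacle is verifying that the stalkwise prescription $\omega\mapsto|T_\omega|(\Phi(x)(\omega))$ actually yields an $\mathcal{L}$-measurable section representing an element of $L_1(\nabla,m)$, rather than merely a pointwise-a.e.\ formula. The natural route is to use the Kantorovich supremum formula along a countable family $\{y_k\}\subset L_\infty(\nabla,m)$ chosen so that $\{\ell_\nabla(y_k)(\omega)\}$ is dense in the order interval $[-\Phi(x)(\omega),\Phi(x)(\omega)]$ for a.e.\ $\omega$; then $|T_\omega|(\Phi(x)(\omega))=\sup_k|\Phi(Ty_k)(\omega)|$, which is measurable as a countable supremum of sections already known to be measurable. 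This is carried out first for $x\in L_\infty(\nabla,m)$ and then transferred to arbitrary $x\geq 0$ in $L_1(\nabla,m)$ using the $(bo)$-density from Theorem \ref{th2_2}(iv), the $L_0(\Omega)$-boundedness $\|Sx\|_1\leq c\|x\|_1$ being inherited stalkwise from $\||T_\omega|\|\leq l(c)(\omega)$.
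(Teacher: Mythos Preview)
Your approach is sound in outline but takes a genuinely different route from the paper. The paper gives a direct, intrinsic argument that never leaves $L_1(\nabla,m)$: for $0\leq x\in L_1(\nabla,m)$ it considers the set $E(x)$ of all sums $|Tx_1|+\cdots+|Tx_n|$ over positive decompositions $x=x_1+\cdots+x_n$, observes that each such element has $\|\cdot\|_1$ bounded by $\|T\|\,\|x\|_1$, shows (following Vulikh, Theorem VIII.7.2) that $E(x)$ is upward filtering, and then invokes the monotone-convergence theorem for the $L_0(\Omega)$-valued integral to conclude that $E(x)$ is order bounded; hence $T$ carries order-bounded sets to order-bounded sets and is regular. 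Your stalkwise reduction instead pushes the problem down to the classical $L_1$ fibres and glues the fibrewise moduli back together. Both strategies work, but the paper's is considerably shorter and sidesteps the two technical burdens you flagged: extending the construction preceding Theorem~\ref{th2_3} to operators lacking the hypotheses $\|T\|\leq\mathbf{1}$ and $T(L_\infty(\nabla,m))\subset L_\infty(\nabla,m)$ (note in particular that your expression $l(c)(\omega)$ is only literally defined when $c\in L_\infty(\Omega)$, so a further localisation or rescaling is needed), and verifying measurability of $\omega\mapsto|T_\omega|(\Phi(x)(\omega))$. Your route has the merit of being in the spirit of the bundle methodology used throughout the rest of the paper, and as a by-product would give the identification $|T|_\omega=|T_\omega|$ a.e.; but for the bare inclusion $B(L_1(\nabla,m))\subset H_r(L_1(\nabla,m))$ the intrinsic Riesz--Kantorovich argument is more economical.
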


\begin{proof}  Let $T\in B(L_1(\nabla,m))$, $0\leq x\in
L_1(\nabla,m).$  The set of all elements from $L_1(\nabla,m)$
of the form $y=|T(x_1)|+\cdots+|T(x_n)|$ is denoted by $E(x)$,
where $x=x_1+\cdots+x_n, x_i\geq0, i=1,2,...,n.$ It is clear that
for $y\in E(x)$ the following inequalities hold
$$\|y\|_1\leq \sum\limits_{i=1}^{n}\|Tx_i\|_1\leq \|T\|\sum\limits_{i=1}^{n}\|x_i\|_1=\|T\|\|x\|_1.$$
 Repeating the proof of \cite[Theorem VIII.7.2]{Vul}
 we obtain that for any $y_1,y_2,\cdots y_k$ from $E(x)$
there exists $y\in E(x)$ such that $\sup\limits_{1\leq i\leq
k}y_i\leq y.$
Since $\|y\|_1\leq \|T\|\|x\|_1$ we have that $\|\sup\limits_{1\leq i\leq
k}y_i\|_1\leq \|T\|\|x\|_1.$

We denote by $A$ the direction all of finite  subsets of $E(x)$,
ordered by inclusion and for every  $\alpha\in A$ we set
$y_\alpha=\sup\{y: y\in\alpha\}$. It is clear that
$\{y_\alpha\}_{\alpha\in A}$ is an increasing  net of positive
elements from $L_1(\nabla,m),$ in addition $\|y_\alpha\|_1\leq
\|T\|\|x\|_1$ for all $\alpha\in A.$ By theorem of
monotone convergence \cite{zak} there exists $z\in
L_1(\nabla,m)$ such  that $y_\alpha\uparrow z$. Hence $E(x)$ is an order
bounded set in $L_1(\nabla,m)$.

Repeating again the proof of \cite[Theorem VIII.7.2]{Vul}
we have that $T(F)$ is an order bounded set in $L_1(\nabla,m)$ for any
order bounded set $F\subset L_1(\nabla,m)$. Therefore, by \cite[Theorem VIII 2.2]{Vul}, $T\in H_r(L_1(\nabla,m)).$
\end{proof}

 Proposition \ref{pr2_4} implies the following result.
\begin{theorem}\label{th2_5} Let $T: L_1(\nabla,m)\rightarrow
 L_1(\nabla,m)$ be an  $L_0(\Omega)$-bounded linear operator  in
 $L_1(\nabla,m)$. Then there exists a unique    $L_0(\Omega)$-bounded linear positive operator $|T|$ in  $L_1(\nabla,m)$ such that
\begin{enumerate}
\item[(i)] $\|T\|=\| |T| \|;$

\item[(ii)] $|T{x}|\leq
|T||{x}|$ for all ${x}\in L_1(\nabla,m)$;

\item[(iii)] $|T|{x}=\sup\{|T{y}|: {y}\in L_1(\nabla,m),
|{y}|\leq {x}\ \}$ for all ${x}\in L_1(\nabla,m), {x}\geq0$;

\item[(iv)] If \ \ $T(L_\infty(\nabla,m))\subset L_\infty(\nabla,m)$
and $\|T\|_{L_\infty(\nabla,m)\rightarrow
L_\infty(\nabla,m)}<\infty$, then $|T|(L_\infty(\nabla,m))\subset
L_\infty(\nabla,m)$ and $\|T\|_{L_\infty(\nabla,m)\rightarrow
L_\infty(\nabla,m)}=\| |T| \|_{L_\infty(\nabla,m)\rightarrow
L_\infty(\nabla,m)}$.
 \end{enumerate}
\end{theorem}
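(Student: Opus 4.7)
The plan is to leverage Proposition \ref{pr2_4}, which places every $L_0(\Omega)$-bounded operator $T$ inside $H_r(L_1(\nabla,m))$, and then import the standard modulus construction from the theory of regular operators on order complete vector lattices \cite[3.1.2]{K2}. This immediately furnishes a positive linear operator $|T|$ on $L_1(\nabla,m)$ satisfying formula (iii) and the pointwise bound (ii); uniqueness is inherited from the uniqueness of the modulus in $H_r(L_1(\nabla,m))$. The remaining tasks are to upgrade this order-theoretic modulus to an $L_0(\Omega)$-bounded operator with the sharp norm identity (i) — after which $L_0(\Omega)$-linearity follows automatically via \cite[5.1.9]{K2} — and to establish the $L_\infty$-invariance claim (iv).

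For (i) I would prove the two inequalities separately. The direction $\|T\| \leq \| |T| \|$ is immediate from (ii): for every $x \in L_1(\nabla,m)$,
$$\|Tx\|_1 = \| |Tx| \|_1 \leq \| |T|\,|x| \|_1 \leq \| |T| \|\,\|x\|_1.$$
For the reverse $\| |T| \| \leq \|T\|$, I would revisit the approximating net constructed in the proof of Proposition \ref{pr2_4}: for $0 \leq x \in L_1(\nabla,m)$, the directed net $\{y_\alpha\}_{\alpha \in A}$ formed from finite suprema over the set $E(x)$ satisfies $y_\alpha \uparrow |T|x$ together with the uniform estimate $\|y_\alpha\|_1 \leq \|T\|\,\|x\|_1$. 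Since the order continuity of the $L_0(\Omega)$-valued norm (Theorem \ref{th2_2}(ii)) forces $\|y_\alpha\|_1 \uparrow \| |T|x \|_1$, the uniform bound passes to the limit, and the extension to arbitrary $x$ follows by decomposing $x = x^+ - x^-$.

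For (iv) I would assume $T(L_\infty(\nabla,m)) \subset L_\infty(\nabla,m)$ with $c := \|T\|_{L_\infty(\nabla,m) \to L_\infty(\nabla,m)} < \infty$. For $0 \leq x \in L_\infty(\nabla,m)$ and any $y$ with $|y| \leq x$, the ideal property of $L_\infty(\nabla,m)$ gives $y \in L_\infty(\nabla,m)$ with $\|y\|_\infty \leq \|x\|_\infty$, so $|Ty| \leq c\|x\|_\infty\mathbf{1}$. Taking the supremum over such $y$ and invoking formula (iii) yields $|T|x \leq c\|x\|_\infty\mathbf{1}$, which both places $|T|x$ in $L_\infty(\nabla,m)$ and gives $\| |T| \|_{L_\infty(\nabla,m) \to L_\infty(\nabla,m)} \leq c$. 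The reverse inequality is obtained exactly as in the first half of (i) by applying (ii) within $L_\infty(\nabla,m)$.

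The main conceptual obstacle is bridging the gap between the purely order-theoretic modulus supplied by \cite[3.1.2]{K2} and the $L_0(\Omega)$-valued norm structure, i.e.\ establishing that $|T|$ is genuinely $L_0(\Omega)$-bounded with the sharp identity $\|T\| = \| |T| \|$ rather than merely an inequality. The decisive ingredient is the monotone directed net from the proof of Proposition \ref{pr2_4}, coupled with the order continuity of the norm (Theorem \ref{th2_2}(ii)); these two ingredients together allow the scalar-flavored Banach-lattice estimate $\|y_\alpha\|_1 \leq \|T\|\,\|x\|_1$ to pass from the approximants $y_\alpha$ to the order limit $|T|x$.
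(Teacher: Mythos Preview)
Your proposal is correct and follows essentially the same route as the paper: existence of $|T|$ via Proposition \ref{pr2_4} and \cite[3.1.2]{K2}, with (ii) and (iii) coming from the modulus definition and uniqueness from (iii); part (iv) is argued in both cases by bounding $|Ty|$ uniformly for $|y|\leq x$ and invoking (iii), then getting the reverse inequality from (ii). The only real difference is in (i): the paper simply cites \cite[Theorem VIII.6.3]{Vul}, whereas you spell out the argument by pushing the bound $\|y_\alpha\|_1 \leq \|T\|\,\|x\|_1$ from the net in Proposition \ref{pr2_4} to the limit via the order continuity of the norm (Theorem \ref{th2_2}(ii))---this is exactly the content of the cited reference adapted to the $L_0(\Omega)$-valued setting, so your version is a more self-contained rendering of the same idea rather than a different approach. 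One small point worth making explicit in your write-up is that the supremum $z$ of the net $\{y_\alpha\}$ really equals $|T|x$; this follows from (iii) since every element of $E(x)$ is dominated by $|T|x$, and conversely every $|Ty|$ with $|y|\leq x$ is dominated by the element $|Ty^+|+|Ty^-|+|T(x-|y|)|\in E(x)$.
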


\begin{proof}
The existence of $|T|$ follows from Proposition \ref{pr2_4}.
Properties (ii), (iii) follow from the definition of $|T|$. The
uniqueness follows from property (iii). The equality  (i) can be
established  as in \cite[Theorem VIII. 6.3]{Vul}.

(iv). Let $y\in L_\infty(\nabla,m)$ and $|y|\leq \mathbf{1}$. Then
$\|y\|_\infty\leq 1$ and, since $T(L_\infty(\nabla,m))\subset
L_\infty(\nabla,m)$ we have that
$$|Ty|\leq\|Ty\|_\infty\mathbf{1}\leq
\|T\|_{L_\infty(\nabla,m)\rightarrow
L_\infty(\nabla,m)}\|y\|_\infty \mathbf{1}\leq
\|T\|_{L_\infty(\nabla,m)\rightarrow L_\infty(\nabla,m)}
\mathbf{1}.$$
 Hence, $$|T|(\mathbf{1})\leq
\|T\|_{L_\infty(\nabla,m)\rightarrow L_\infty(\nabla,m)}
\mathbf{1}.$$

Since the operator $|T|$ is positive, it follows that
$$-\|x\|_\infty|T|(\mathbf{1})\leq |T|(x)\leq \|x\|_\infty |T|(\mathbf{1})$$
for all $x\in L_\infty(\nabla,m)$, i.e.
 $$||T|(x)|\leq \|x\|_\infty |T|(\mathbf{1})\leq
\|x\|_\infty\|T\|_{L_\infty(\nabla,m)\rightarrow
L_\infty(\nabla,m)} \mathbf{1}.$$
Hence
$|T|(L_\infty(\nabla,m))\subset L_\infty(\nabla,m).$
Further, the inequality  $|T{x}|\leq |T||{x}|$ implies that
$$\|T\|_{L_\infty(\nabla,m)\rightarrow L_\infty(\nabla,m)}\leq\|
|T| \|_{L_\infty(\nabla,m)\rightarrow L_\infty(\nabla,m)}.$$
 On the other hand   $$|T|(|x|)=\sup\{|T{y}|: {y}\in
L_\infty(\nabla,m), |{y}|\leq |{x}| \}\leq
 \|T\|_{L_\infty(\nabla,m)\rightarrow
L_\infty(\nabla,m)}\|x\|_\infty\mathbf{1},$$ i.e.
$\||T|\|_{L_\infty(\nabla,m)\rightarrow L_\infty(\nabla,m)}\leq
\|T\|_{L_\infty(\nabla,m)\rightarrow L_\infty(\nabla,m)}.$
\end{proof}

\section{Ergodic theorems for $L_1$--
 $L_\infty$ contractions in   $L_p(\nabla,m)$}
 Let $\nabla$ be an arbitrary complete Boolean algebra,
 let $B(\Omega)$ be a complete Boolean algebra of all idempotents in
 $L_0(\Omega,\Sigma,\mu)$, where $(\Omega,\Sigma,\mu)$ is a  measure space with the direct sum property, let $m : {\nabla}\to
L_0(\Omega,\Sigma,\mu)$ be a $d$--decomposable measure. We shall
identify $B(\Omega)$ with regular Boolean subalgebra in $\nabla$
and assume  that $m(qe)=qm(e)$ for all $q\in B(\Omega),
e\in\nabla.$

A linear operator $T : L_1(\nabla,m)\rightarrow
 L_1(\nabla,m)$ is called an $L_1$-- $L_\infty$ contraction if $T\in
 B(L_1(\nabla,m))$, $T(L_\infty(\nabla,m))\subset L_\infty(\nabla,m)$
and $\|T\|_{L_1(\nabla,m)\rightarrow
L_1(\nabla,m)}\leq\mathbf{1}$,
$\|T\|_{L_\infty(\nabla,m)\rightarrow L_\infty(\nabla,m)}\leq1.$
The set of all $L_1$-- $L_\infty$ contractions we denote by
$C_{1,\infty}(\nabla,m).$ Theorem \ref{th2_5} implies that $|T|\in
C_{1,\infty}(\nabla,m)$ for any  $T\in C_{1,\infty}(\nabla,m)$.

\begin{theorem}\label{th3_1} If  $T\in C_{1,\infty}(\nabla,m)$, $p>1$, then $T(L_p(\nabla,m)) \subset L_p(\nabla,m)$
 and $\|T\|_{L_p(\nabla,m)\rightarrow
L_p(\nabla,m)}\leq\mathbf{1}.$
\end{theorem}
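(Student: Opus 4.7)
The plan is to reduce the statement, via the stalkwise representation of Section~2, to the classical fact that an $L_1$--$L_\infty$ contraction on a scalar measure space is also an $L_p$-contraction (Riesz--Thorin, or the Dunford--Schwartz argument cited in the introduction).

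First, by Theorem~\ref{th2_3} there is, for each $\omega\in\Omega$, a contraction $T_\omega:L_1(\nabla_\omega,m_\omega)\to L_1(\nabla_\omega,m_\omega)$ satisfying $\Phi(Tx)(\omega)=T_\omega(\Phi(x)(\omega))$ almost everywhere for every $x\in L_1(\nabla,m)$. The next step is to verify that $T_\omega$ is also an $L_\infty$-contraction on each stalk: for $y\in L_\infty(\nabla,m)$ with $|y|\leq\mathbf 1$ the hypothesis $\|T\|_{L_\infty\to L_\infty}\leq 1$ yields $|Ty|\leq\mathbf 1$, and the order-preservation properties~3) and~6) of the vector-valued lifting $\ell_\nabla$, together with $\ell_\nabla(\mathbf 1)(\omega)=\mathbf 1_{\nabla_\omega}$, give $|T_\omega\ell_\nabla(y)(\omega)|=|\ell_\nabla(Ty)(\omega)|\leq\mathbf 1_{\nabla_\omega}$. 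Any $\eta\in L_\infty(\nabla_\omega,m_\omega)$ with $|\eta|\leq\mathbf 1$ can be approximated in $L_1(\nabla_\omega,m_\omega)$ by a sequence $\ell_\nabla(y_n)(\omega)$ with $|y_n|\leq\mathbf 1$ (using the density property~5 together with a lattice truncation via property~6); passing to an a.e.\ convergent subsequence then yields $|T_\omega\eta|\leq\mathbf 1$, so $T_\omega$ is an $L_\infty$-contraction.

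With $T_\omega$ simultaneously an $L_1$- and an $L_\infty$-contraction on the classical measure space $(\nabla_\omega,m_\omega)$, the Riesz--Thorin interpolation theorem gives $\|T_\omega\|_{L_p(\nabla_\omega,m_\omega)\to L_p(\nabla_\omega,m_\omega)}\leq 1$ for every $1<p<\infty$. For arbitrary $x\in L_p(\nabla,m)$, Theorem~\ref{th2_2}(iv) places $x$ in $L_1(\nabla,m)$, the $L_p$-bundle representation of $L_p(\nabla,m)$ as $L_0(\Omega,Y_p)$ gives $\Phi(x)(\omega)\in L_p(\nabla_\omega,m_\omega)$ almost everywhere, and combining this with the a.e.\ identity $\Phi(Tx)(\omega)=T_\omega(\Phi(x)(\omega))$ from Theorem~\ref{th2_3} produces
\[
\|\Phi(Tx)(\omega)\|_{L_p(\nabla_\omega,m_\omega)}\;\leq\;\|\Phi(x)(\omega)\|_{L_p(\nabla_\omega,m_\omega)}\quad\text{for a.e.\ }\omega.
\]
Translating this stalkwise inequality back through the isomorphism between $L_p(\nabla,m)$ and $L_0(\Omega,Y_p)$ yields $Tx\in L_p(\nabla,m)$ together with $\|Tx\|_p\leq\|x\|_p$, as required.

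The main obstacle I expect is the second step: the $L_\infty$-contractivity of $T_\omega$ requires carefully matching the vector-valued lifting and the isomorphism $\Phi$---both constructed in Theorem~\ref{th2_3} via the $L_1$-bundle $Y_1$---with the sup-norm structure on each stalk Boolean algebra $\nabla_\omega$, and in particular justifying the lattice truncations used to reduce to approximants $y_n$ with $|y_n|\leq\mathbf 1$ inside $L_\infty(\nabla,m)$.
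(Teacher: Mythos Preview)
Your proposal is correct, but the route differs from the paper's in one key place: rather than pushing $T$ itself down to the stalks and then fighting for the $L_\infty$-contractivity of $T_\omega$, the paper first replaces $T$ by its modulus $|T|$ (available by Theorem~\ref{th2_5}, which in turn rests on Proposition~\ref{pr2_4}). Because $|T|$ is positive and $|T|\in C_{1,\infty}(\nabla,m)$ implies $|T|(\mathbf 1)\leq\mathbf 1$, the stalkwise operators $S_\omega$ coming from Theorem~\ref{th2_3} are positive with $S_\omega\mathbf 1_{\nabla_\omega}\leq\mathbf 1_{\nabla_\omega}$, which gives $L_\infty$-contractivity of $S_\omega$ for free. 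Interpolation then yields $\|\,|T|\,\|_{L_p\to L_p}\leq\mathbf 1$, and the final step $|Tx|\leq|T|\,|x|$ transfers this back to $T$.

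Your direct approach avoids the regularity machinery of Proposition~\ref{pr2_4}/Theorem~\ref{th2_5} altogether, at the price of the truncation argument you flagged. That argument is indeed sound: given $\eta\in L_\infty(\nabla_\omega,m_\omega)$ with $|\eta|\leq\mathbf 1_\omega$, density (property~5) produces $z_n\in L_\infty(\nabla,m)$ with $\ell_\nabla(z_n)(\omega)\to\eta$ in $L_1(\nabla_\omega,m_\omega)$; setting $y_n=(z_n\wedge\mathbf 1)\vee(-\mathbf 1)$, properties~3) and~6) (with linearity) give $\ell_\nabla(y_n)(\omega)=(\ell_\nabla(z_n)(\omega)\wedge\mathbf 1_\omega)\vee(-\mathbf 1_\omega)$, and since the pointwise truncation map is $1$-Lipschitz and fixes $\eta$, one still has $\ell_\nabla(y_n)(\omega)\to\eta$ in $L_1$ with $|y_n|\leq\mathbf 1$. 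So the obstacle you anticipated is genuinely surmountable, and both proofs go through; the paper's detour through $|T|$ simply sidesteps it.
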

\begin{proof}
By Theorem  \ref{th2_3} for every
$\omega\in\Omega$
 there exists a positive linear contraction $S_\omega :
 L_1(\nabla_\omega,m_\omega)\rightarrow L_1(\nabla_\omega,m_\omega)$ such that $S_\omega (\Phi(x)(\omega)) = \Phi(|T|x)(\omega)$ for a.e.
 $\omega\in\Omega$ and for every
 $x\in L_1(\nabla,m)$. Since  $|T|\in C_{1,\infty}(\nabla,m)$, it follows that
$|T|(\mathbf{1})\leq \mathbf{1}$, and therefore
$S_\omega\mathbf{1}_{\nabla_\omega}=\Phi(|T|\mathbf{1})(\omega)\leq
 \Phi(\mathbf{1})(\omega)=\mathbf{1}_{\omega}$ for every
$\omega\in\Omega$, \  where $\mathbf{1}_{\omega}$ is the unit element
of the Boolean algebra $\nabla_\omega$. Hence  $S_\omega$ is a
 positive linear contraction in $L_\infty(\nabla_\omega,m_\omega)$ for a.e.
 $\omega\in\Omega$.

 Since $ L_p(\nabla_\omega,m_\omega)$ is an interpolation space  between  $
 L_1(\nabla_\omega,m_\omega)$ and
 $L_\infty(\nabla_\omega,m_\omega)$ \cite[ch. II, \S 4]{kre}, we have that $S_\omega(L_p(\nabla_\omega,m_\omega))\subset
  L_p(\nabla_\omega,m_\omega)$ and \\ $\|S_\omega\|_{L_p(\nabla_\omega,m_\omega)\rightarrow L_p(\nabla_\omega,m_\omega)}\leq
 1$. Hence $|T|(L_p(\nabla,m))\subset L_p(\nabla,m)$. Since
 $\Phi(|x|^p)(\omega)=(\Phi(|x|)(\omega))^p$  for a.e.
 $\omega\in\Omega$  ($x\in L_p(\nabla,m)$) \cite{Ga3}, it follows that
  $$\||T|(|x|)\|^p_{p}(\omega) = \|S_\omega(\Phi(|x|)(\omega))\|^p_{L_p(\nabla_\omega,m_\omega)}   \leq \|\Phi(|x|)(\omega)\|^p_{L_p(\nabla_\omega,m_\omega)} = \|x|^p_{p}(\omega)$$
for a.e.   $\omega\in\Omega$  \cite{Ga3}.  Hence
$\||T|\|_{L_p(\nabla,m)\rightarrow L_p(\nabla,m)}\leq\mathbf{1}.$
The inequality $|T{x}|\leq
|T||{x}|$ implies that
$T(L_p(\nabla,m)) \subset L_p(\nabla,m)$, in addition
$\|T\|_{L_p(\nabla,m)\rightarrow
L_p(\nabla,m)}\leq\||T|\|_{L_p(\nabla,m)\rightarrow
L_p(\nabla,m)}\leq\mathbf{1}.$
\end{proof}

The following theorem is a version of Theorem  \ref{th2_3} for an operator $T\in
C_{1,\infty}(\nabla,m).$
\begin{theorem}\label{th3_2} If  $T\in C_{1,\infty}(\nabla,m)$, then for
every $\omega\in\Omega$ there exists  $T_\omega \in C_{1,\infty}(\nabla_\omega,m_\omega)$
 such that $T_\omega(\Phi(x)(\omega)) = \Phi(Tx)(\omega)$ for a.e.
 $\omega\in\Omega$ and  for every
$x\in L_1(\nabla,m)$.
\end{theorem}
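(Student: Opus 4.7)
The plan is to apply Theorem~\ref{th2_3} twice. The first application, to $T$ itself, yields an $L_1$-contraction $T_\omega:L_1(\nabla_\omega,m_\omega)\to L_1(\nabla_\omega,m_\omega)$ with $T_\omega(\Phi(x)(\omega))=\Phi(Tx)(\omega)$ for a.e.\ $\omega$ and every $x\in L_1(\nabla,m)$. The second application, to the module $|T|$, which lies in $C_{1,\infty}(\nabla,m)$ by Theorem~\ref{th2_5}, yields a positive $L_1$-contraction $S_\omega$ with $S_\omega(\Phi(x)(\omega))=\Phi(|T|x)(\omega)$ a.e. What remains is to promote $T_\omega$ to an $L_\infty$-contraction on $L_\infty(\nabla_\omega,m_\omega)$.

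First I would argue, exactly as in the proof of Theorem~\ref{th3_1}, that the assumption $|T|\mathbf{1}\leq\mathbf{1}$ (valid because $|T|\in C_{1,\infty}(\nabla,m)$) translates via $\Phi$ into $S_\omega\mathbf{1}_{\nabla_\omega}\leq\mathbf{1}_{\nabla_\omega}$ for every $\omega\in\Omega$. Together with positivity this makes $S_\omega$ a contraction on $L_\infty(\nabla_\omega,m_\omega)$. Next I would transfer the lattice inequality $|Tx|\leq|T||x|$ (Theorem~\ref{th2_5}(ii)) to the stalks: since $\Phi$ is an isometric lattice isomorphism onto $L_0(\Omega,Y_1)$, we have $|\Phi(Tx)(\omega)|=\Phi(|Tx|)(\omega)$ and $\Phi(|T||x|)(\omega)=S_\omega(\Phi(|x|)(\omega))$ almost everywhere, so that
\begin{equation*}
|T_\omega(\Phi(x)(\omega))|\leq S_\omega(\Phi(|x|)(\omega))
\end{equation*}
for a.e.\ $\omega$ and every $x\in L_\infty(\nabla,m)$. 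By property~5) of the vector valued lifting $\ell_\nabla$ the set $\{\Phi(x)(\omega):x\in L_\infty(\nabla,m)\}$ is dense in $L_1(\nabla_\omega,m_\omega)$, and combining this with the $L_1$-continuity of $T_\omega$ and $S_\omega$ allows the inequality $|T_\omega h|\leq S_\omega|h|$ to be extended to every $h\in L_1(\nabla_\omega,m_\omega)$.

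With these two ingredients the proof concludes quickly. For $f\in L_\infty(\nabla_\omega,m_\omega)$ with $|f|\leq\mathbf{1}_{\nabla_\omega}$, the inequality just established gives $|T_\omega f|\leq S_\omega|f|\leq S_\omega\mathbf{1}_{\nabla_\omega}\leq\mathbf{1}_{\nabla_\omega}$, so $T_\omega f\in L_\infty(\nabla_\omega,m_\omega)$ with $\|T_\omega f\|_\infty\leq 1$. Combined with the $L_1$-contraction property already delivered by Theorem~\ref{th2_3}, this yields $T_\omega\in C_{1,\infty}(\nabla_\omega,m_\omega)$. I expect the main obstacle to be the bookkeeping with the exceptional null sets appearing in the stalkwise inequalities: one must ensure that the almost-everywhere set on which $|\Phi(Tx)(\omega)|\leq S_\omega(\Phi(|x|)(\omega))$ holds can be chosen uniformly enough that, after passing through a countable dense family of test elements from $L_\infty(\nabla,m)$, the inequality $|T_\omega h|\leq S_\omega|h|$ is valid for every fixed $\omega$ outside a single null set, so that the conclusion $T_\omega\in C_{1,\infty}(\nabla_\omega,m_\omega)$ holds at every such $\omega$.
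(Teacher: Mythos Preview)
Your approach is essentially the same as the paper's: obtain $T_\omega$ from Theorem~\ref{th2_3}, obtain the positive $S_\omega$ associated with $|T|$, transfer the inequality $|Tx|\le|T||x|$ to the stalks, extend by density to all of $L_1(\nabla_\omega,m_\omega)$, and deduce the $L_\infty$--contractivity. The difference is in the device used to handle exactly the obstacle you flag at the end.

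Where you work with $\Phi$ and are left worrying about uniform null sets and countable dense test families, the paper works with the vector-valued lifting $\ell_\nabla$. Recall from the construction preceding Theorem~\ref{th2_3} that $T_\omega$ is \emph{defined} for every $\omega$ by $T_\omega(\ell_\nabla(x)(\omega))=\ell_\nabla(Tx)(\omega)$ on the dense set $\{\ell_\nabla(x)(\omega):x\in L_\infty(\nabla,m)\}$, and similarly for $S_\omega$. Using property~6) of $\ell_\nabla$ one has $|\ell_\nabla(Tx)(\omega)|\le \ell_\nabla(|T||x|)(\omega)=S_\omega(|\ell_\nabla(x)(\omega)|)$ for \emph{every} $\omega$ and every $x\in L_\infty(\nabla,m)$, not merely almost every $\omega$. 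Density then yields $|T_\omega g|\le S_\omega|g|$ for all $g\in L_1(\nabla_\omega,m_\omega)$ at every fixed $\omega$, with no exceptional set to manage. Your route through $\Phi$ and a countable dense family can be made to work, but the lifting is precisely the tool designed to suppress this bookkeeping, and the paper exploits it.

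A minor additional difference: after obtaining $|T_\omega g|\le S_\omega|g|$, the paper passes through the module $|T_\omega|$ (well defined since bounded operators on $L_1(\nabla_\omega,m_\omega)$ are regular) to get $|T_\omega|(\mathbf{1}_\omega)\le S_\omega(\mathbf{1}_\omega)\le\mathbf{1}_\omega$, and then concludes via $|T_\omega g|\le|T_\omega||g|$. Your direct argument $|T_\omega f|\le S_\omega|f|\le S_\omega\mathbf{1}_{\nabla_\omega}\le\mathbf{1}_{\nabla_\omega}$ is equally valid and slightly shorter.
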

\begin{proof}Theorem
 \ref{th2_3} provides the existence of a linear operator   $T_\omega$ in $L_1(\nabla_\omega,m_\omega)$
 satisfying $T_\omega(\Phi(x)(\omega)) = \Phi(Tx)(\omega)$, in addition
 $T_\omega(\ell_\nabla(x)(\omega))=\ell_\nabla(Tx)(\omega)$ for
 all $\omega\in\Omega$, $x\in L_\infty(\nabla,m)$. Let $S_\omega$
 be
 those positive linear contractions in
 $L_1(\nabla_\omega,m_\omega)$, that in proof of Theorem  \ref{th3_1}.
 Since $|T{x}|\leq |T||{x}|$ and $S_\omega(\ell_\nabla(x)(\omega))=\ell_\nabla(|T|x)(\omega)$
 for
 all $\omega\in\Omega$, $x\in L_\infty(\nabla,m)$ we have that $$|T_\omega(\ell_\nabla(x)(\omega))|=|\ell_\nabla(T(x))(\omega)|\leq\ell_\nabla(|T||x|)(\omega)=
 S_\omega(\ell_\nabla(|x|)(\omega))=S_\omega(|\ell_\nabla(x)(\omega)|).$$
 Using density of the linear space $\{\ell_\nabla(x)(\omega) :
 x\in L_\infty(\nabla,m)\}$  in
 $L_1(\nabla_\omega,m_\omega)$ (see property 5) of $\ell_\nabla$)  we
  obtain that $|T_\omega g|\leq S_\omega|g|$ for all $g\in L_1(\nabla_\omega,m_\omega).$
Since every bounded linear operator in $L_1(\nabla_\omega,m_\omega)$
is regular, the   module $|T_\omega|$ is defined, which is a
positive contraction in $L_1(\nabla_\omega,m_\omega)$, in addition $$|T_\omega|h=\sup\{|T_\omega{g}|: {g}\in
L_1(\nabla_\omega,m_\omega), |{g}|\leq {h}\ \}\leq S_\omega h$$
for all $0\leq{h}\in L_1(\nabla_\omega,m_\omega).$ In particular
$|T_\omega|(\mathbf{1}_\omega)\leq S_\omega(\mathbf{1}_\omega)\leq
\mathbf{1}_\omega,$ that implies
$|T_\omega|(L_\infty(\nabla_\omega,m_\omega))\subset
L_\infty(\nabla_\omega,m_\omega)$ and
$\||T_\omega|\|_{L_\infty(\nabla_\omega,m_\omega)\rightarrow
L_\infty(\nabla_\omega,m_\omega)}\leq 1$. Since $|T_\omega
g|\leq|T_\omega||g|$ for all ${g}\in L_1(\nabla_\omega,m_\omega)$,
it follows that $T_\omega(L_\infty(\nabla_\omega,m_\omega))\subset
L_\infty(\nabla_\omega,m_\omega)$ and
$\|T_\omega\|_{L_\infty(\nabla_\omega,m_\omega)\rightarrow
L_\infty(\nabla_\omega,m_\omega)}\leq 1.$
\end{proof}

We provide some auxiliary facts related to $(o)$-convergence of a
sequence $\{x_n\}\subset L_1(\nabla,m)$ and $(o)$-convergence of
the sequence $\{\Phi(x_n)\}(\omega)\}$, $\omega\in\Omega$.

Consider on the vector lattice $L_0(\nabla,m)$ the metric
$\rho(x,y)=\allowbreak
 \int |x-y|(\mathbf{1}+|x-y|)^{-1}d{m}$ with values in $L_0(\Omega)$.

 Let  $Z:\omega\rightarrow
Z(\omega)=L_0(\nabla_\omega,m_\omega)$ be a bundle over $\Omega$
of metric spaces
 $(L_0(\nabla_\omega,m_\omega),\rho_\omega),$ where $\rho_\omega(u(\omega),v(\omega))=\allowbreak
 \int
 |u(\omega)-v(\omega)|(\mathbf{1}_{\omega}+|u(\omega)-v(\omega)|)^{-1}d{m_\omega}$,
 $\omega\in\Omega$. In \cite{Ga3} it is established that there exists an
 isometric isomorphism $\Psi$ from $(L_0(\nabla,m),\rho)$ onto
 complete $L_0(\Omega)$--metrisable vector lattice $L_0(\Omega,
(Z,\mathcal{E}))$ such that
$$\Psi\left(\sum\limits_{i=1}^{n}\lambda_ie_i\right)= \left(\sum\limits_{i=1}^{n}\lambda_i\gamma_\omega(e_i)\right)^\sim= \Phi\left(\sum\limits_{i=1}^{n}\lambda_ie_i\right),$$
in addition, $L_0(\Omega,(Y_1, \mathcal{E}))$ can be identified with vector sublattice
in $L_0(\Omega, (Z,\mathcal{E}))$ and $\Psi(x)=\Phi(x)$ for all
$x\in L_1(\nabla,m)$, where $Y_1(\omega) = L_1(\nabla_\omega,m_\omega)$.

\begin{theorem}\label{th3_3}  \cite{CGa}. If  $\{x_{{n}}\}\subset
L_0(\nabla,m)$, then $\sup\limits_{n\geq 1}x_{n}$ exists in
$L_0(\nabla,m)$ if and only
 if $\sup\limits_{n\geq
1}\Psi(x_n){(\omega)}$ exists in $L_0(\nabla_\omega,m_\omega)$ for
 a.e. $\omega\in\Omega$. In this case,
 $\Psi(\sup\limits_{n\geq 1}x_{n})(\omega)=\sup\limits_{n\geq 1}\Psi(x_n){(\omega)}$ for
 a.e. $\omega\in\Omega$.
\end{theorem}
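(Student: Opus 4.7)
The plan is to reduce to a monotone increasing sequence, transfer order/metric information between $L_0(\nabla,m)$ and its stalkwise representation via $\Psi$, and invoke completeness of $L_0(\Omega,(Z,\mathcal{E}))$ for the backward direction.

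First I would set $y_n:=x_1\vee\cdots\vee x_n$, so that $\{y_n\}$ is increasing. Since $\Psi$ is a vector lattice isomorphism onto $L_0(\Omega,(Z,\mathcal{E}))$ whose lattice operations are stalkwise, one has $\Psi(y_n)(\omega)=\Psi(x_1)(\omega)\vee\cdots\vee\Psi(x_n)(\omega)$ a.e. Consequently $\sup_n x_n$ exists in $L_0(\nabla,m)$ iff $\sup_n y_n$ does, and $\sup_n\Psi(x_n)(\omega)$ exists in $L_0(\nabla_\omega,m_\omega)$ iff $\sup_n\Psi(y_n)(\omega)$ does, with equal values. This reduces the problem to the monotone case.

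For the direction $(\Rightarrow)$, suppose $y=\sup_n y_n$ exists in $L_0(\nabla,m)$. Then $y-y_n\downarrow 0$, so the integrand $(y-y_n)(\mathbf{1}+(y-y_n))^{-1}$ decreases to $0$; order-continuity of the $L_0(\Omega)$-valued integral against $m$ gives $\rho(y_n,y)\downarrow 0$ in $L_0(\Omega)$, and in particular $\rho(y_n,y)(\omega)\to 0$ a.e. Since $\Psi$ is an isometry and the metric on $L_0(\Omega,(Z,\mathcal{E}))$ is computed stalkwise via $\rho_\omega$, this yields $\rho_\omega(\Psi(y_n)(\omega),\Psi(y)(\omega))\to 0$ a.e. The sequence $\Psi(y_n)(\omega)$ is increasing in $L_0(\nabla_\omega,m_\omega)$, and an increasing metric-convergent sequence has its metric limit as supremum (closedness of $\leq$ under metric limits in $L_0(\nabla_\omega,m_\omega)$). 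Hence $\sup_n\Psi(x_n)(\omega)=\Psi(y)(\omega)$ a.e.

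For $(\Leftarrow)$, let $u(\omega):=\sup_n\Psi(x_n)(\omega)$ exist for a.e.\ $\omega$. Then $\Psi(y_n)(\omega)\uparrow u(\omega)$ a.e. Because $\rho_\omega$ is monotone inside an order interval, for $N\leq n\leq k$ one has $\rho_\omega(\Psi(y_n)(\omega),\Psi(y_k)(\omega))\leq\rho_\omega(\Psi(y_N)(\omega),u(\omega))$, and the right side decreases to $0$ a.e.\ as $N\to\infty$. Hence $\{\Psi(y_n)\}$ is Cauchy in $L_0(\Omega,(Z,\mathcal{E}))$, and by completeness it converges to some $v\in L_0(\Omega,(Z,\mathcal{E}))$ that coincides stalkwise with $u$ a.e. By surjectivity of $\Psi$, $v=\Psi(y)$ for some $y\in L_0(\nabla,m)$, and a short argument using that $\Psi$ is an order isomorphism yields $y=\sup_n x_n$: indeed $y\geq x_n$ follows from $\Psi(y)(\omega)\geq\Psi(x_n)(\omega)$ a.e., and for any upper bound $y'$ of $\{x_n\}$, $\Psi(y')(\omega)\geq u(\omega)=\Psi(y)(\omega)$ a.e., so $y'\geq y$.

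The main obstacle I expect is the backward direction, specifically producing from the stalkwise supremum $u$ an actual element of $L_0(\Omega,(Z,\mathcal{E}))$. The key step is promoting a.e.\ stalkwise Cauchy behaviour of $\{\Psi(y_n)(\omega)\}$ into $L_0(\Omega)$-metric Cauchy behaviour in the bundle space by means of the monotonicity of $\rho_\omega$ inside order intervals, at which point completeness of $L_0(\Omega,(Z,\mathcal{E}))$ supplies the required section.
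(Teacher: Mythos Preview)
The paper does not supply a proof of this theorem: it is simply quoted from \cite{CGa} and used as a black box, so there is no ``paper's own proof'' to compare with. Your argument, however, is sound and follows what one would expect the original proof to look like: reduce to an increasing sequence, use that $\Psi$ is an order and $L_0(\Omega)$-metric isomorphism, and exploit that in each stalk $L_0(\nabla_\omega,m_\omega)$ monotone convergence and $\rho_\omega$-convergence coincide.

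One small technical point in the backward direction deserves a line of justification. You bound $\rho_\omega(\Psi(y_n)(\omega),\Psi(y_k)(\omega))\leq\rho_\omega(\Psi(y_N)(\omega),u(\omega))$ and then claim the sequence is Cauchy in $L_0(\Omega,(Z,\mathcal{E}))$. The right-hand side involves $u(\omega)$, which at this stage is only a stalkwise object, so its measurability as a function of $\omega$ is not yet available. A clean way around this: for fixed $N$, the elements $\rho(\Psi(y_N),\Psi(y_k))\in L_0(\Omega)$ are increasing in $k\geq N$ and bounded above by $\mathbf{1}$, hence have a supremum $h_N\in L_0(\Omega)$; the sequence $h_N$ is decreasing, and pointwise $h_N(\omega)=\rho_\omega(\Psi(y_N)(\omega),u(\omega))\to 0$ a.e., so $h_N\downarrow 0$ in $L_0(\Omega)$, which is exactly the $(o)$-Cauchy condition you need before invoking completeness. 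With this adjustment your proof is complete.
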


This theorem  implies the following corollary.
\begin{corollary}\label{c3_4} \cite{CGa}. If ${x}_{{n}}, {x}\in
L_0(\nabla,m)$ and ${x}_{{n}} \stackrel{(o)}\rightarrow {x}$,
 then
 $\Psi({x}_{{n}})(\omega) \stackrel{(o)}\rightarrow \Psi({x})(\omega)$ in $L_0(\nabla_\omega,m_\omega)$
 for  a.e. $\omega\in\Omega$. Conversely, if ${x}_{{n}}\in L_0(\nabla,m)$ and $\Psi(x_{{n}})(\omega)
  \stackrel{(o)}\rightarrow v(\omega)$ for some $v(\omega)\in
 L_0(\nabla_\omega,m_\omega)$ for  a.e. $\omega\in\Omega$,
 then there exists  $x\in L_0(\nabla,m)$ such that $\Psi({x})(\omega)=v(\omega)$ for
 a.e. $\omega\in\Omega$  and ${x}_n \stackrel{(o)}\rightarrow
 {x}$ in $L_0(\nabla,m)$.
\end{corollary}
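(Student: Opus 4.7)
The plan is to reduce both directions to Theorem~\ref{th3_3} via the standard characterization of $(o)$-convergence in an order complete vector lattice: $x_n\stackrel{(o)}\rightarrow x$ if and only if $\sup_{k\geq n}|x_k-x|$ exists for some $n$ and decreases to $0$, equivalently $\sup_{k\geq n}x_k\downarrow x$ and $\inf_{k\geq n}x_k\uparrow x$. Both $L_0(\nabla,m)$ and each stalk $L_0(\nabla_\omega,m_\omega)$ are order complete, and since $\Psi$ is a lattice isomorphism onto its image, $\Psi(|x_k-x|)=|\Psi(x_k)-\Psi(x)|$.

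For the direct implication, I would set $y_n:=\sup_{k\geq n}|x_k-x|$ in $L_0(\nabla,m)$, which exists and satisfies $y_n\downarrow 0$. Two applications of Theorem~\ref{th3_3} do the job: applied to the tail $\{|x_k-x|\}_{k\geq n}$ it yields $\Psi(y_n)(\omega)=\sup_{k\geq n}|\Psi(x_k)(\omega)-\Psi(x)(\omega)|$ for a.e.\ $\omega$; applied to $\{-y_n\}$ (whose supremum in $L_0(\nabla,m)$ is $0$) it gives $\Psi(y_n)(\omega)\downarrow 0$ for a.e.\ $\omega$. These identities hold simultaneously off a single null set, and the stalkwise $(o)$-convergence follows by domination.

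For the converse, the hypothesis implies that $\{\Psi(x_k)(\omega)\}$ is order bounded in $L_0(\nabla_\omega,m_\omega)$ for a.e.\ $\omega$ (being $(o)$-convergent), so the stalkwise suprema $\sup_{k\geq n}\Psi(x_k)(\omega)$ and infima $\inf_{k\geq n}\Psi(x_k)(\omega)$ exist. Theorem~\ref{th3_3} then lifts these to elements $U_n:=\sup_{k\geq n}x_k$ and $L_n:=\inf_{k\geq n}x_k$ in $L_0(\nabla,m)$, and the theorem applied once more produces $x:=\inf_n U_n=\sup_n L_n$ with $\Psi(x)(\omega)=v(\omega)$ a.e. A final application of Theorem~\ref{th3_3} to the decreasing nonnegative sequence $\{U_n-L_n\}$ shows $U_n-L_n\downarrow 0$ in $L_0(\nabla,m)$; since $|x_k-x|\leq U_n-L_n$ for $k\geq n$, this yields $x_n\stackrel{(o)}\rightarrow x$.

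The only real subtlety is null-set bookkeeping: each invocation of Theorem~\ref{th3_3} gives an ``a.e.'' statement, and one must check that all of the pointwise identities and inequalities involved hold off a common conull set. This is routine because countable unions of null sets are null. Otherwise the argument is a clean transfer of the order-convergence characterization through the isomorphism $\Psi$.
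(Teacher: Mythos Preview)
Your argument is correct and is precisely the derivation the paper has in mind: the paper does not spell out a proof but simply states that the corollary follows from Theorem~\ref{th3_3} (with a citation to \cite{CGa}), and your reduction via the $\limsup/\liminf$ characterization of $(o)$-convergence is the natural way to make that implication explicit. There is nothing materially different between your route and the paper's, only that you have filled in the details the paper omits.
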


The following theorem is a vector version of
 well known N.Danford and J.T. Schward's ergodic theorems
for  $L_1$--$L_\infty$ contraction in Banach--Kantorovich
lattice
 $L_p(\nabla,m)$ associated with $L_0(\Omega)$--valued measure.

\begin{theorem}\label{th3_5} If  $T\in C_{1,\infty}(\nabla,m)$, $s_n(T)({x})=\frac{1}{n}
 \sum\limits_{i=0}^{n-1} T^i ({x})$, $x\in L_p(\nabla,m)$,  $1 \leq p < \infty$, \  then
\begin{enumerate}
\item[(i)] (statistical ergodic theorem)\, the sequence  $\{s_n(T)({x})\}$ is  $(bo)$ - convergent in   $L_p(\nabla, m)$   for any ${x} \in L_p(\nabla, m)$ ;

\item[(ii)] for every  ${x}\in L_p(\nabla, m), p>1,\ q>1,\
 \frac{1}{p}+\frac{1}{q}=1,$ the sequence  $s_n(T)({x})$ is order bounded in
 $L_p(\nabla, m)$ and
 $\|\sup\limits_{n\geq 1} |s_n(T)(x)|\|_p \leq q
 \|{x}\|_p$, in this case there exists  $\tilde{x}\in
 L_p(\nabla, m)$ such that the sequence $s_n(T)({x})$ is $(o)$-convergent to $\tilde{x}$
 in
 $L_p(\nabla, m)$;

\item[(iii)] for every ${x}\in L_1(\nabla, m)$ there exists $\tilde{x}\in
 L_1(\nabla, m)$ such that the sequence
 $s_n(T)({x})$ is $(o)$-convergent to $\tilde{x}$ in
 $L_0(\nabla, m)$.
 \end{enumerate}
\end{theorem}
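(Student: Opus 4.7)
The plan is to transfer the problem to classical stalks and apply the scalar Dunford--Schwartz ergodic theorems. Fix $x \in L_p(\nabla,m)$. By Theorem \ref{th3_2} there is a family $\{T_\omega\}_{\omega\in\Omega} \subset C_{1,\infty}(\nabla_\omega,m_\omega)$ with $\Phi(Ty)(\omega) = T_\omega(\Phi(y)(\omega))$ a.e.; iterating and averaging yields $\Phi(s_n(T)(x))(\omega) = s_n(T_\omega)(\Phi(x)(\omega))$ a.e. Since $m(\mathbf{1}) = \mathbf{1}$ we have $m_\omega(\mathbf{1}_\omega) = 1$, so each stalk is a probability measure algebra and the classical maximal, pointwise and mean ergodic theorems of Dunford and Schwartz \cite[ch.~VIII, \S 6]{DSh} apply to $T_\omega$ on $L_p(\nabla_\omega,m_\omega)$.

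For (ii), I would apply the scalar maximal inequality $\|\sup_n |s_n(T_\omega) f|\|_{p,\omega} \leq q\|f\|_{p,\omega}$ to $f = \Phi(x)(\omega)$. By Theorem \ref{th3_3} the supremum $z := \sup_n |s_n(T)(x)|$ exists in $L_0(\nabla,m)$ with $\Psi(z)(\omega) = \sup_n |s_n(T_\omega)(\Phi(x)(\omega))|$ a.e. Since the $L_0(\Omega)$-valued norm is stalkwise the scalar $L_p$ norm (property 2 of the vector valued lifting $\ell_\nabla$), the scalar bound lifts to $\|z\|_p \leq q\|x\|_p$ in $L_0(\Omega)$, which proves the dominated inequality and shows $z \in L_p(\nabla,m)$. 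The classical pointwise ergodic theorem gives $s_n(T_\omega)(\Phi(x)(\omega)) \to \tilde x_\omega$ in $L_0(\nabla_\omega,m_\omega)$; Corollary \ref{c3_4} then produces $\tilde x \in L_0(\nabla,m)$ with $s_n(T)(x) \stackrel{(o)}{\to} \tilde x$, and the inequality $|\tilde x| \leq z$ together with the ideal property in Theorem \ref{th2_2}(i) places $\tilde x$ in $L_p(\nabla,m)$.

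Part (iii) follows by the same lifting mechanism: the scalar pointwise Dunford--Schwartz theorem produces $s_n(T_\omega)(\Phi(x)(\omega)) \to \tilde x_\omega$ a.e. (no domination in $L_1$ required), and Corollary \ref{c3_4} converts this to $(o)$-convergence of $s_n(T)(x)$ in $L_0(\nabla,m)$. For (i), the scalar mean ergodic theorem for $L_1$--$L_\infty$ contractions on the probability measure algebra $(\nabla_\omega,m_\omega)$ (valid for every $1 \leq p < \infty$) gives $\|s_n(T_\omega)(\Phi(x)(\omega)) - \tilde x_\omega\|_{p,\omega} \to 0$ a.e., which is $\|s_n(T)(x) - \tilde x\|_p(\omega) \to 0$ a.e. To upgrade this pointwise-in-$\omega$ convergence to $(o)$-convergence of the $L_0(\Omega)$-valued norms, I would set $u_n := \sup_{k\geq n} \|s_k(T)(x) - \tilde x\|_p$; the contraction bound $\|s_k(T)(x)\|_p \leq \|x\|_p$ from Theorem \ref{th3_1} ensures that each $u_n$ exists in $L_0(\Omega)$, the sequence $u_n$ is monotone decreasing, and a.e. convergence $u_n(\omega) \downarrow 0$ combined with order-completeness of $L_0(\Omega)$ yields $u_n \downarrow 0$, which is exactly $(bo)$-convergence in $L_p(\nabla,m)$.

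The main obstacle is the upgrade from stalkwise scalar statements to the lattice-normed ones. This is handled by two devices: Theorem \ref{th3_3} and Corollary \ref{c3_4} to convert stalkwise suprema and stalkwise $(o)$-convergence into their counterparts in $L_0(\nabla,m)$, and the tail-supremum trick (using uniform domination by $\|x\|_p$ and order-completeness of $L_0(\Omega)$) to convert a.e.-in-$\omega$ scalar norm convergence into $(o)$-convergence of $L_0(\Omega)$-valued norms. Additional care is needed in part (i) for $p = 1$: the scalar $L_1$ mean ergodic theorem must be invoked in each stalk, which is legitimate because $m_\omega$ is a probability, so that $L_\infty(\nabla_\omega,m_\omega)$ is $L_1$-dense and the pointwise theorem plus dominated convergence gives mean convergence first for bounded vectors and then, by the $L_1$-contraction property, for all of $L_1(\nabla_\omega,m_\omega)$.
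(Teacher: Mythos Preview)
Your proof is correct and follows essentially the same strategy as the paper's: represent $T$ stalkwise by $T_\omega\in C_{1,\infty}(\nabla_\omega,m_\omega)$ via Theorem~\ref{th3_2}, apply the classical Dunford--Schwartz mean, maximal and pointwise theorems in each stalk, and lift the conclusions back through Theorem~\ref{th3_3} and Corollary~\ref{c3_4}. The only cosmetic difference is that for the maximal inequality in~(ii) the paper first passes to the positive modulus $|T|$ (using $|s_n(T)(x)|\le s_n(|T|)(|x|)$) and invokes the positive-contraction result of \cite{CGa}, whereas you lift the scalar maximal inequality directly through Theorem~\ref{th3_3}; both routes are valid and yield the same bound.
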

\begin{proof}
From the proof of Theorem \ref{th3_1} it follows that $|T|$ is a positive contraction in $L_p(\nabla,m)$, in addition $|T|(\mathbf{1})\leq \mathbf{1}.$
 From \cite{CGa} follows correctness items (i)-(iii) of the theorem for the operator  $|T|\in C_{1,\infty}(\nabla,m)$.
Since $|T^ix|\leq|T|^i(|x|), i=1,2,...$ (see Theorem \ref{th2_5}), it follows that
for $x\in L_p(\nabla,m)$ the inequalities
$$|s_n(T)(x)|\leq \frac{1}{n}
 \sum\limits_{i=0}^{n-1} |T^i (x)|\leq \frac{1}{n}
 \sum\limits_{i=0}^{n-1} |T|^i ||x|=s_n(|T|)(|{x}|)$$
holds.

Since  Theorem \ref{th3_5} (ii) is valid for $|T|$, the sequence
 $\{s_n(|T|)(|x|)\}$ is order bounded in $L_p(\nabla,m)$ and
 $$\|\sup\limits_{n\geq 1}
 |s_n(T)(x)|\|_p\leq\|\sup\limits_{n\geq 1}
 s_n(|T|)(|{x}|)\|_p\leq q \|{x}\|_p.$$

 According to  Theorem \ref{th3_1} and \ref{th3_2} we have that
 $s_n(T_\omega)(\Phi(x)(\omega))=\Phi(s_n(T)(x))(\omega)$ for
 a.e. $\omega\in\Omega$, where ${x}\in L_p(\nabla, m)$. Since
 $T_\omega\in C_{1,\infty}(\nabla_\omega,m_\omega)$ (Theorem \ref{th3_2}),
 Corollaries 4 and 5 \cite[ch.VIII, \S 5]{DSh} imply that
 there exists  $v(\omega)\in L_p(\nabla_\omega,m_\omega)$ such that
$$\|s_n(T_\omega)(\Phi(x)(\omega))-v(\omega)\|_{L_p(\nabla_\omega,m_\omega)}\rightarrow
0$$ as $n\rightarrow\infty.$ Since $\Phi(s_n(T))\in L_0(\Omega,Y_p)$, it follows that $v^\sim\in L_0(\Omega,Y_p)$ and there exists
$\tilde{x}\in L_p(\nabla, m)$ such that $\Phi(\tilde{x})=v^\sim$.
Therefore, $\|s_n(T)(x)- \tilde{x}\|_p\stackrel{(o)}\rightarrow0,$
i.e. the sequence \emph{(bo)} - converges in   $L_p(\nabla,
 m)$. Now by Theorem 6 \cite[ch.VIII, \S 5]{DSh} for $p=1$ we obtain
 that $s_n(T_\omega)(\Phi(x)(\omega))\stackrel{(o)}\rightarrow \Phi(\tilde{x})(\omega)=v(\omega)$
 in $L_0(\nabla_\omega,m_\omega)$  for
 a.e. $\omega\in\Omega$. Corollary \ref{c3_4} implies that $s_n(T)(x)\stackrel{(o)}\rightarrow\tilde{x}$ in $L_0(\nabla,
 m)$. Using this $(o)$--convergence and order boundedness in $L_p(\nabla,
 m)$ of the sequence $\{s_n(T)({x})\}$ at $p>1$, $x\in L_p(\nabla, m)$ we have that $\tilde{x}\in L_p(\nabla, m)$ and $s_n(T)({x})$ is $(o)$-convergent to $\tilde{x}$
 in
 $L_p(\nabla, m)$.
\end{proof}

\section{Ergodic theorems  in
 Orlicz--Kantorovich lattices $L_M(\nabla,m)$}

Now, we shall present a version of Theorem \ref{th3_5} for
Orlicz--Kantorovich lattices $L_M(\nabla,m)$.

Let $M : R \rightarrow [0,\infty)$ be an $N$-function and let $M^*$
be the complementary $N$-function to $M$ \cite[ch.I, \S\S  1-2]{KR}.

In the same way as in \cite{ZC}, we consider the following subsets in
$L_1(\nabla,m)$:
$$L^0_M(\nabla,m) := L_0(\nabla,m) := \{x\in
L_0(\nabla,m) : M(x)\in L_1(\nabla,m)\},$$
$$L_M(\nabla,m) := \{x \in L_0(\nabla,m) : xy \in
L_1(\nabla,m),  \forall y\in L^0_{M^*}(\nabla,m)\}$$ for which the inclusions
$$L_\infty(\nabla,m)\subset L^0_M(\nabla,m)\subset L_M(\nabla,m)\subset
L_1(\nabla,m)$$ hold.

The set $L_M(\nabla,m)$ is a vector sublattice in $L_1(\nabla,m)$
and with respect to the $L_0(\Omega)$--valued norm
$$ \|x\|_M := \sup\{|\int
xy dm| : y\in L^0_{M^*}(\nabla,m), \int
M^*(y)d\hat{\mu}\leq{\bf1}\}$$ the pair $(L_M(\nabla,m),
\|\cdot\|_M)$ is a Banach--Kantorovich lattice, which is called an
Orlicz--Kantorovich lattice \cite{ZC}.

The following theorem establishes that  Theorem  \ref{th3_2} and  \ref{th3_3} from \cite{ZC}
holds
 for any $L_1$-- $L_\infty$ contractions.
\begin{theorem}\label{th4_1}
If $T\in C_{1,\infty}(\nabla,m)$, then
 \begin{enumerate}
\item[(i)] $T(L_M(\nabla,m))\subset L_M(\nabla,m)$ and $\|T\|_{L_M(\nabla,m)\rightarrow
L_M(\nabla,m)}\leq\mathbf{1}$;

\item[(ii)] if the  $N$--function $M$ meets $\bigtriangleup_2$--condition, then   $s_n(T)(x)$ is $(bo)$-convergent in
$L_M(\nabla,m)$ for every $x\in L_M(\nabla,m)$;

\item[(iii)] if the $N$--function $M$ satisfy $$\sup\limits_{s\geq
1}\frac{\int\limits _1^s M(t^{-1}s)dt}{M(s)}<\infty,$$ then the
sequence $s_n(T)(x)$ is order bounded in Orlicz--Kantorovich lattice
$L_M(\nabla,m)$ for all $x\in L_M(\nabla,m)$ and $s_n(T)(x)$ is
$(o)$-convergent in $L_M(\nabla,m)$.
 \end{enumerate}
\end{theorem}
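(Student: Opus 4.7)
The plan is to mirror the strategy used in the proof of Theorem~\ref{th3_5}: combine the estimates for the dominating positive operator $|T|$ (which lies in $C_{1,\infty}(\nabla,m)$ by Theorem~\ref{th2_5} and to which the positive-contraction results of \cite{ZC} apply) with the pointwise inequality $|T^ix|\le|T|^i(|x|)$, and transfer the classical Dunford--Schwartz type ergodic theorems for $L_1$--$L_\infty$ contractions on Orlicz spaces through the stalkwise representation given by Theorems~\ref{th3_1}--\ref{th3_2}.

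For part (i), I would first note that $|T|\in C_{1,\infty}(\nabla,m)$, so Theorem~3.2 of \cite{ZC} (the positive case) yields $|T|(L_M(\nabla,m))\subset L_M(\nabla,m)$ with $\||T|\|_{L_M\to L_M}\le\mathbf{1}$. A direct inspection of the duality definition of $L_M(\nabla,m)$ shows that it is an order ideal of $L_1(\nabla,m)$ and that $\|\cdot\|_M$ is monotone, so the inequality $|Tx|\le|T|(|x|)$ transfers the conclusion from $|T|$ to $T$, giving $Tx\in L_M(\nabla,m)$ with $\|Tx\|_M\le\||T|(|x|)\|_M\le\|x\|_M$.

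For part (ii), iterating the domination gives $|s_n(T)(x)|\le s_n(|T|)(|x|)$; Theorem~3.3 of \cite{ZC} applied to $|T|$ then shows that $\{s_n(|T|)(|x|)\}$ is $(bo)$-convergent, hence order bounded, in $L_M(\nabla,m)$, which transfers order-boundedness to $\{s_n(T)(x)\}$. To identify the limit I would pass to the stalks via Theorem~\ref{th3_2}: $T_\omega\in C_{1,\infty}(\nabla_\omega,m_\omega)$ with $\Phi(s_n(T)(x))(\omega)=s_n(T_\omega)(\Phi(x)(\omega))$. Stalkwise, the classical mean ergodic theorem for $L_1$--$L_\infty$ contractions on the Orlicz space $L_M(\nabla_\omega,m_\omega)$ under $\Delta_2$ (see e.g.\ \cite[ch.~4]{Kr}) yields convergence of $s_n(T_\omega)(\Phi(x)(\omega))$ to some $v(\omega)$ in the $L_M(\nabla_\omega,m_\omega)$-norm, and via the Orlicz analogue of property~2) of $\ell_\nabla$ (available from \cite{ZC}) this stalkwise convergence lifts to $(bo)$-convergence of $s_n(T)(x)$ in $L_M(\nabla,m)$.

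For part (iii), the stronger integral condition on $M$ allows Theorem~3.3 of \cite{ZC} applied to $|T|$ to produce $\sup_{n\ge 1}s_n(|T|)(|x|)\in L_M(\nabla,m)$, and domination gives $\sup_{n\ge 1}|s_n(T)(x)|\in L_M(\nabla,m)$, i.e., order boundedness. For the $(o)$-convergence I would invoke stalkwise the classical individual ergodic theorem for $L_1$--$L_\infty$ contractions on Orlicz spaces and then lift through Corollary~\ref{c3_4} to $(o)$-convergence of $s_n(T)(x)$ to some $\widetilde x$ in $L_0(\nabla,m)$; combined with the order-boundedness in $L_M(\nabla,m)$, this upgrades to $(o)$-convergence in $L_M(\nabla,m)$. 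The main obstacle across (ii) and (iii) is the stalk-to-global passage for the Orlicz norm: one must either extract or verify from \cite{ZC} that the vector valued lifting $\ell_\nabla$ intertwines the $L_0(\Omega)$-valued Orlicz norm $\|\cdot\|_M$ with the scalar Orlicz norms $\|\cdot\|_{L_M(\nabla_\omega,m_\omega)}$, so that stalkwise $L_M$-norm convergence really corresponds to $(bo)$-convergence in the Banach--Kantorovich lattice $L_M(\nabla,m)$.
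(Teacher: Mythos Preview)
Your outline is sound and tracks the paper's own approach closely: domination by $|T|$ to reduce to the positive case handled in \cite{ZC}, the stalkwise representation $T_\omega\in C_{1,\infty}(\nabla_\omega,m_\omega)$ from Theorem~\ref{th3_2}, and the stalk-to-global passage for the Orlicz norm, which is exactly \cite[Proposition~2.3]{ZC} (cited in the paper's proof and correctly anticipated by you as the ``main obstacle''). For parts (i) and (iii) your argument and the paper's are essentially the same; in (i) the paper works directly on the stalks via interpolation of $L_M(\nabla_\omega,m_\omega)$ between $L_1$ and $L_\infty$ rather than quoting \cite{ZC} for $|T|$ and then dominating, but this is a cosmetic difference, and in (iii) the paper shortcuts your stalkwise individual-ergodic step by simply invoking the already-proved Theorem~\ref{th3_5}(iii) for $(o)$-convergence in $L_0(\nabla,m)$.

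The one substantive divergence is in (ii). You defer to \cite[ch.~4]{Kr} for a ready-made mean ergodic theorem for $L_1$--$L_\infty$ contractions on Orlicz spaces under $\Delta_2$; the paper instead supplies the stalkwise convergence explicitly. It records a weak-compactness criterion in $L_M(\nabla_\omega,m_\omega)$ (Proposition~\ref{cl4_2}, taken from \cite{suk}), uses it together with $m_\omega(\mathbf{1}_\omega)=1$ to show that the Boolean algebra $\nabla_\omega$ is relatively weakly compact in $L_M(\nabla_\omega,m_\omega)$, notes that simple functions are dense there under $\Delta_2$, and then applies the abstract Dunford--Schwartz mean ergodic theorem \cite[ch.~VIII, \S5, Corollary~3]{DSh}. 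Your route is fine provided the cited Orlicz-space result is really available in Krengel in that form; the paper's argument is more self-contained and avoids hunting for an exact reference. Also, your preliminary appeal in (ii) to \cite[Theorem~3.3]{ZC} for order boundedness via $|T|$ is not needed (and that theorem carries the stronger integral hypothesis of part (iii), not $\Delta_2$); the paper does not use order boundedness at all in establishing (ii).
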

\begin{proof}
(i)  By \cite[Proposition 2.3]{ZC} we have
that an element $x\in L_1(\nabla,m)$ belongs  to $L_M(\nabla,m)$ if
and only if  $\Phi(x)(\omega)\in L_M(\nabla_\omega,m_\omega)$ for
almost all $\omega\in\Omega$, moreover $ \|x\|_M(\omega)=
\|\Phi(x)(\omega)\|_{L_M(\nabla_\omega,m_\omega)}$ a.e. on
$\Omega$. Since $L_M(\nabla_\omega,m_\omega)$ is an interpolation
 space
 between $L_1(\nabla_\omega,m_\omega)$ and
 $L_\infty(\nabla_\omega,m_\omega)$  \cite[ch. II, \S 4]{kre}, repeating the  proof of Theorem \ref{th3_1}  we obtain that $T_\omega(L_M(\nabla_\omega,m_\omega))\subset
  L_M(\nabla_\omega,m_\omega)$ and $\|T\|_{L_M(\nabla,m)\rightarrow
L_M(\nabla,m)}\leq\mathbf{1}$.

(ii) We need the next properties of the classical Orlicz spaces
$L_M(\nabla_\omega,m_\omega)$ which immediately follow from
 \cite[Proposition 2.1]{suk}.
\begin{claim}\label{cl4_2} Let $N$--function $M$ meets
$\bigtriangleup_2$--condition and let $K$ be a norm bounded set in
$L_M(\nabla_\omega,m_\omega)$. Then $K$ is relative weak compact
if and only if for each $f\in
L^*_M(\nabla_\omega,m_\omega)=L_{M^*}(\nabla_\omega,m_\omega)$ and a
sequence $q_n\in \nabla_\omega$ with $q_n\downarrow 0$ the convergence
$$\sup\left\{\left|\int q_nfhdm_\omega\right|: h\in K\right\}\rightarrow0$$
holds.
\end{claim}

Since $m_\omega(\mathbf{1}_\omega)=1$, Proposition \ref{cl4_2} implies that $\nabla_\omega$ is relatively weak compact in
$L_M(\nabla_\omega,m_\omega)$.

Let $T_\omega$ be an $L_1$--$L_\infty$  contraction in
 $L_1(\nabla_\omega,m_\omega)$ such that $T_\omega(\Phi(x)(\omega)) = \Phi(Tx)(\omega)$ for a.e.
 $\omega\in\Omega$ and  for every
$x\in L_1(\nabla,m)$ (see Theorem  \ref{th3_2}). It is clear that
$\|s_n(T_\omega)\|_{L_M(\nabla_\omega,m_\omega)\rightarrow
L_M(\nabla_\omega,m_\omega)}\leq1$ for all $n\in\mathbb{N}$ and\\
$\|\frac{1}{n}s_n(T_\omega)(h)\|_{L_M(\nabla_\omega,m_\omega)}\rightarrow0$
as $n\rightarrow\infty$ for all $h\in
L_M(\nabla_\omega,m_\omega)$.

Since the $N$--function $M$ meets $\bigtriangleup_2$--condition, the
linear subspace
$$\bigg\{\sum\limits_{i=1}^n \lambda_ie_i : \lambda_i\in \mathbb{R},\ e_i\in\nabla_\omega,\ i=1,\ldots,n,\
 n\in\mathbb{N}\bigg\}$$ is dense in
 $L_M(\nabla_\omega,m_\omega)$, in addition, $\nabla_\omega$ is a
 relatively weak compact set in  $L_M(\nabla_\omega,m_\omega)$.  Hence by
Corollary 3 \cite[ch.VIII, \S 5]{DSh} we have that
 $s_n(T_\omega)(h)$ converges in $L_1(\nabla_\omega,m_\omega)$ for each $h\in
L_1(\nabla_\omega,m_\omega)$. Repeating now the proof of Theorem
\ref{th3_5}(i) we obtain that the sequence $s_n(T)(x)$ $(bo)$--converges in
$L_M(\nabla,m)$ for every $x\in L_M(\nabla,m)$.

(iii)\cite[Theorem 3.3]{ZC} implies that the sequence
$s_n(|T|)(|x|)$ is order bounded in $L_M(\nabla,m)$ for all $x\in
L_M(\nabla,m)$. Repeating  the proof of Theorem \ref{th3_2}(ii) we obtain
that the sequence $s_n(T)(x)$ also is order bounded in
$L_M(\nabla,m)$. Hence from $(o)$--convergence  of sequence
$s_n(T)(x)$ in $L_0(\nabla,m)$ (see Theorem \ref{th3_5}(iii)) it follows its
$(o)$--convergence in $L_M(\nabla,m)$.

\end{proof}

\section{Multiparameter  and weighted ergodic theorems  in
Banach~---Kantorovich lattice
 $L_p(\nabla,m)$}
Let $\mathbb{S}$ be the unit circle in the field $\mathbb{C}$ of
complex numbers and let $\mathbb{Z}$ be the ring of integer numbers.
A function $P_s:\mathbb{Z}\rightarrow\mathbb{C}$ is called a
trigonometric polynomial if
$P_s(k)=\sum\limits_{j=1}^sr_j\lambda_j^k, k\in \mathbb{Z}$, for
some $\{r_j\}_{j=1}^s\subset\mathbb{C}$ and
$\{\lambda_j\}_{j=1}^s\subset\mathbb{S}$. A sequence
$\{\alpha(k)\}$ of complex numbers is called a bounded Besicovich
sequence (BB--sequence) if $\sup\{|\alpha(k)|:
k\in\mathbb{Z}\}<\infty$
 and for every $\varepsilon>0$ there exists a sequence of
trigonometric polynomials $P_s$, such that
$$\lim\limits_{n}\sup\frac{1}{n}\sum\limits_{k=0}^{n-1}|\alpha(k)-P_s(k)|<\varepsilon$$

The following theorem is a vector version of weighted ergodic theorem
for $L_1$--$L_\infty$ contractions in Banach--Kantorovich
lattice
 $L_p(\nabla,m)$ (compare with \cite{GM}).
\begin{theorem}\label{th5_1}
Let  $\{\alpha(k)\}$ be a BB--sequence of real
numbers and $T\in C_{1,\infty}(\nabla,m)$. Then  the averages
$$a_n(T)(x)=\frac{1}{n}\sum\limits_{k=0}^{n-1}\alpha(k)T^k(x)$$
$(o)$-converges in $L_0(\nabla,m)$ for every $x\in L_1(\nabla,m)$.

In addition, if $x\in L_p(\nabla,m)$ and $1<p<\infty$, then the
sequence $a_n(T)(x)$ is order bounded in $L_p(\nabla,m)$ and
$$\|\sup\limits_{n\geq 1} |a_n(T)(x)| \|_{p} \leq
\left(\frac{p}{p-1}\right) \sup\limits_{k}|\alpha(k)\| x \|_{p}.$$
\end{theorem}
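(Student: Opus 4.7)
The plan is to reduce the theorem to the classical scalar weighted ergodic theorem by stalkwise disintegration, paralleling the proof of Theorem \ref{th3_5}. First I would use Theorem \ref{th3_2} to obtain, for every $\omega\in\Omega$, an operator $T_\omega\in C_{1,\infty}(\nabla_\omega,m_\omega)$ satisfying $T_\omega(\Phi(x)(\omega))=\Phi(Tx)(\omega)$ a.e. By induction on $k$ together with the linearity of $\Phi$ this yields
$$\Phi(a_n(T)(x))(\omega)=a_n(T_\omega)(\Phi(x)(\omega)) \quad \text{for a.e.}~\omega\in\Omega$$
for every $x\in L_1(\nabla,m)$ and every $n\in\mathbb{N}$.

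Next I would handle the case $1<p<\infty$ by invoking the classical dominated weighted ergodic theorem (as in \cite{JO}) applied to the $L_1$--$L_\infty$ contraction $T_\omega$ on the scalar measure space $(\nabla_\omega,m_\omega)$: for every $g\in L_p(\nabla_\omega,m_\omega)$,
$$\Big\|\sup_{n\geq 1}|a_n(T_\omega)(g)|\Big\|_{L_p(\nabla_\omega,m_\omega)}\leq \frac{p}{p-1}\sup_{k}|\alpha(k)|\,\|g\|_{L_p(\nabla_\omega,m_\omega)}.$$
Applying this stalkwise at $g=\Phi(x)(\omega)$, combined with Theorem \ref{th3_3} (which converts the existence of stalkwise suprema into existence of suprema in $L_0(\nabla,m)$) and the identity $\|y\|_p(\omega)=\|\Phi(y)(\omega)\|_{L_p(\nabla_\omega,m_\omega)}$ a.e. for $y\in L_p(\nabla,m)$, one recovers both the order boundedness of $\{a_n(T)(x)\}$ in $L_p(\nabla,m)$ and the claimed $L_0(\Omega)$-valued dominated inequality.

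For the $(o)$-convergence in $L_0(\nabla,m)$ valid for any $x\in L_1(\nabla,m)$, I would apply the classical pointwise weighted ergodic theorem of \cite{JO} (for BB-sequences and $L_1$--$L_\infty$ contractions) to $T_\omega$, obtaining $(o)$-convergence of $a_n(T_\omega)(\Phi(x)(\omega))$ in $L_0(\nabla_\omega,m_\omega)$ for a.e. $\omega\in\Omega$. Corollary \ref{c3_4} then lifts this stalkwise $(o)$-convergence to $(o)$-convergence of $a_n(T)(x)$ in $L_0(\nabla,m)$. The main obstacle I anticipate is the rigorous handling of the supremum $\sup_{n\geq 1}|a_n(T)(x)|$: one must know both that the stalkwise suprema exist a.e. and that their $L_p(\nabla_\omega,m_\omega)$-norms form a measurable section compatible with the Banach--Kantorovich norm. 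Theorem \ref{th3_3} and the norm-identification already established in Section 2 supply precisely this bridge, exactly as they do in the dominated estimate of Theorem \ref{th3_5}(ii).
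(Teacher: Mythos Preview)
Your proposal is correct and follows the same overall stalkwise-disintegration strategy as the paper: apply Theorem~\ref{th3_2} to pass to $T_\omega\in C_{1,\infty}(\nabla_\omega,m_\omega)$, invoke the scalar weighted ergodic theorem of \cite{JO} in each fiber, and lift the $(o)$-convergence via Corollary~\ref{c3_4}.

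The one place where your route differs slightly is the dominated estimate for $1<p<\infty$. You apply the scalar maximal inequality of \cite{JO} stalkwise to $T_\omega$ and then lift the supremum via Theorem~\ref{th3_3}. The paper instead avoids a second stalkwise passage by observing the elementary pointwise bound
\[
|a_n(T)(x)|\leq \sup_k|\alpha(k)|\,s_n(|T|)(|x|)
\]
directly in $L_p(\nabla,m)$, and then quoting the already established Theorem~\ref{th3_5}(ii) for the positive contraction $|T|$. This shortcut sidesteps the measurability-of-the-supremum issue you flagged (since $\sup_n s_n(|T|)(|x|)$ is handled once and for all in Theorem~\ref{th3_5}), but your approach via Theorem~\ref{th3_3} is equally valid and yields the same constant.
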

\begin{proof}
Let $T_\omega$ be an $L_1$--$L_\infty$ contraction in
$L_1(\nabla_\omega,m_\omega)$ such that $T_\omega(\Phi(x)(\omega))
= \Phi(Tx)(\omega)$  for each a.e. $\omega\in\Omega$ for every
$x\in L_1(\nabla,m)$ (see Theorem \ref{th3_2}).  Since $T_\omega\in
C_{1,\infty}(\nabla_\omega,m_\omega)$, Theorem 1.4
\cite{JO} implies that there exists $v(\omega)\in L_0(\nabla_\omega,m_\omega)$
such that $a_n(T_\omega)(\Phi(x)(\omega))\stackrel{(o)}\rightarrow
v(\omega)$
 in $L_0(\nabla_\omega,m_\omega)$  for
 a.e. $\omega\in\Omega$.  By Corollary \ref{c3_4} there exists $\tilde{x}\in L_0(\nabla,
 m)$ such that $a_n(T)(x)\stackrel{(o)}\rightarrow\tilde{x}$ in $L_0(\nabla,
 m)$.

Now let $1<p <\infty$, $x\in L_p(\nabla, m)$. For every
$\omega\in\Omega$ we consider a positive linear contraction
$S_\omega$ in
 $L_1(\nabla_\omega,m_\omega)$ as in the proof of Theorem \ref{th3_1}.
 Since
 $$|a_n(T)(x)|\leq\frac{1}{n}\sum\limits_{k=0}^{n-1}|\alpha(k)||T^k(x)|\leq \sup\limits_{k}|\alpha(k)|s_n(|T|)(|x|),$$
 we have (see Theorem \ref{th3_5}) that the sequence $a_n(T)(x)$ is order
 bounded in $L_p(\nabla,m)$  and
$$\|\sup\limits_{n\geq 1} |a_n(T)(x)| \|_{p} \leq \sup\limits_{k}|\alpha(k)|\|\sup\limits_{n\geq 1} s_n(|T|)(|x|)\|_{p}\leq
\left(\frac{p}{p-1}\right) \sup\limits_{k}|\alpha(k)\| x \|_{p}. $$
\end{proof}

\begin{remark}\label{r5_2}
 Using  Theorem \ref{th5_1} and repeating proof of
Theorem \ref{th3_2}(i) we obtain that under conditions of Theorem \ref{th5_1} for
every $x\in L_p(\nabla, m)$, $p>1$, there exists $\tilde{x}\in
L_p(\nabla,
 m)$ such  that $a_n(T)(x)\stackrel{(o)}\rightarrow\tilde{x}$ in $L_p(\nabla,
 m)$.
\end{remark}
\begin{remark}\label{r5_2_2}
 If $\{j_k\}_{k=1}^\infty$ is an increasing
sequence of positive integers such that
$\sup\limits_k\frac{j_k}{k}<\infty$ then $$\alpha(k)=\left\{%
\begin{array}{ll}
    0, & \hbox{ if $k\neq j_k$;} \\
    1, & \hbox{ if $k=j_k$} \\
\end{array}%
\right.$$  is a BB--sequence. Therefore, Theorem \ref{th5_1} implies
that for any $T\in C_{1,\infty}(\nabla,m)$, ${x}\in L_p(\nabla,
 m)$,  the average  $a_n(T)(x)=\frac{1}{n}\sum\limits_{k=0}^{n-1}T^{j_k}(x)$
 $(o)$-converges
 in $L_0(\nabla,m)$. In addition, if $p>1$, $x\in L_p(\nabla, m)$, then average  $\frac{1}{n}\sum\limits_{k=0}^{n-1}T^{j_k}(x)$
 $(o)$-converges
 in $L_p(\nabla,m)$.
\end{remark}

By Theorems \ref{th2_2}(i), \ref{th5_1} and Remark \ref{r5_2} we obtain the following
\begin{corollary}\label{c5_4} Let  $\{\alpha(k)\}$ be a BB--sequence of
real numbers and $T\in C_{1,\infty}(\nabla,m)$. Then  the averages
$a_n(T)(x)$ $(bo)$-converge in $L_p(\nabla,m)$ for every $x\in
L_p(\nabla,m)$.
\end{corollary}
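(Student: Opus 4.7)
The plan is to upgrade the order convergence supplied by Remark \ref{r5_2} to convergence with respect to the $L_0(\Omega)$-valued norm $\|\cdot\|_p$, by invoking the lattice normed ideal structure of $L_p(\nabla,m)$ recorded in Theorem \ref{th2_2}. Since the corollary cites Theorem \ref{th5_1} and Remark \ref{r5_2}, the natural range is $1<p<\infty$, where Theorem \ref{th5_1} yields order boundedness of $\{a_n(T)(x)\}$ in $L_p(\nabla,m)$ and Remark \ref{r5_2} promotes the $L_0(\nabla,m)$ order convergence to order convergence inside $L_p(\nabla,m)$.

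First I would fix $x\in L_p(\nabla,m)$ and apply Remark \ref{r5_2} to obtain $\tilde{x}\in L_p(\nabla,m)$ such that $a_n(T)(x)\stackrel{(o)}\rightarrow\tilde{x}$ in $L_p(\nabla,m)$. By the definition of order convergence in a vector lattice, this supplies a sequence $\{y_n\}\subset L_p(\nabla,m)$ with $y_n\downarrow 0$ in $L_p(\nabla,m)$ and $|a_n(T)(x)-\tilde{x}|\leq y_n$ for every $n$; a concrete choice is $y_n:=\sup_{k\geq n}|a_k(T)(x)-\tilde{x}|$, which lies in $L_p(\nabla,m)$ thanks to the order boundedness from Theorem \ref{th5_1}.

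Next I would apply Theorem \ref{th2_2}(i): since $(L_p(\nabla,m),\|\cdot\|_p)$ is a $(bo)$-complete lattice normed ideal of $L_0(\nabla)$, the pointwise inequality $|a_n(T)(x)-\tilde{x}|\leq y_n$ transfers to the $L_0(\Omega)$-valued norms and yields $\|a_n(T)(x)-\tilde{x}\|_p\leq\|y_n\|_p$. Then Theorem \ref{th2_2}(ii), applied to $0\leq y_n\downarrow 0$ in $L_p(\nabla,m)$, gives $\|y_n\|_p\downarrow 0$ in $L_0(\Omega)$. Combining the two inequalities, $\|a_n(T)(x)-\tilde{x}\|_p\stackrel{(o)}\rightarrow 0$, which is precisely $(bo)$-convergence of $a_n(T)(x)$ to $\tilde{x}$ in $L_p(\nabla,m)$.

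I do not expect any real obstacle: the entire argument is the standard passage from order convergence in a Banach--Kantorovich lattice to convergence of its lattice-valued norm, enabled by the monotone continuity of $\|\cdot\|_p$ in Theorem \ref{th2_2}(ii). The only subtle point is ensuring that the dominating sequence $y_n$ actually sits inside $L_p(\nabla,m)$, and this is precisely what the order boundedness of $\{a_n(T)(x)\}$ in Theorem \ref{th5_1} guarantees.
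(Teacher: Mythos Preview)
Your proposal is correct and follows exactly the route the paper indicates: the paper does not give a separate proof but simply records that the corollary follows from Theorem~\ref{th2_2}(i), Theorem~\ref{th5_1} and Remark~\ref{r5_2}, and your argument is the natural unpacking of that citation (with Theorem~\ref{th2_2}(ii) supplying the order continuity of $\|\cdot\|_p$ needed to pass from $(o)$-convergence plus order boundedness to $(bo)$-convergence). Your observation that the intended range is $1<p<\infty$ is also correct, since both the order boundedness in Theorem~\ref{th5_1} and the conclusion of Remark~\ref{r5_2} require $p>1$.
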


Now we present a version of multiparameter  ergodic theorem for $L_1$--$L_\infty$ contractions in $L_p(\nabla,m)$.

Firstly, let us recall the  definition of $d$--dimensional analog
BB-sequence \cite{JO}.
Let $d\in \mathbb{N}$,
$\mathbf{n}=(n_1,n_2,\dots,n_d)\in\mathbb{Z}^d$, ${\bf 1}\!\!{\rm
I}=(1,1,\dots,1),$ ${\bf 0}=(0,0,\dots,0).$ The product of all
nonzero components of $\mathbf{n}=(n_1,n_2,\dots,n_d)$ we denote by
$|\mathbf{n}|$. We say that $\{\alpha(\mathbf{k})\}\in\mathbb{C}$
is BB-sequence if $\sup\{|\alpha(\mathbf{k})|:
\mathbf{k}\in\mathbb{Z}^d\}<\infty$ and  for every $\varepsilon>0$
there exists a sequence of trigonometric polynomials in $d$
variables  $\mathbf{P}_\varepsilon$ such that
$$\limsup\limits_{\mathbf{n}\rightarrow\infty}\frac{1}{\mathbf{|n|}}
\sum\limits_{{\mathbf{k}}=\mathbf{0}}^{{\mathbf{n}}-{\bf
1}\!\!{\rm
I}}|\alpha(\mathbf{k})-\mathbf{P}_\varepsilon(\mathbf{k})|<\varepsilon.$$

Let $\mathbb{T}=\{T_1,T_2,\dots,T_d\}$ denotes  a family of  $d$
linear operators mapping  $L_1(\nabla,m)$ into itself such that
$T_i\in C_{1,\infty}(\nabla,m)$, $i=1,\cdots,n$.  Let
$\{\alpha(\mathbf{k})\}$ be a BB-sequence of real numbers. For any
$x\in L_1(\nabla,m)$, $\mathbf{n}\in(\mathbb{N}\cup \{0\})^d$,
$\mathbf{n}\neq\mathbf{0}$ we set
\begin{eqnarray*}
A_{\mathbf{n}}(\mathbb{T})(x)=\frac{1}{n_1\cdot n_2\cdots
n_d}\sum\limits_{k_1=0}^{n_1-1}\cdots\sum\limits_{k_d=0}^{n_d-1}\alpha(k_1,k_2,\cdots,k_d)T_1^{k_1}\cdots
T_d^{k_d}(x)\\=\frac{1}{\mathbf{|n|}}
\sum\limits_{{\mathbf{k}}
=\mathbf{0}}^{{\mathbf{n}}-{\bf
1}\!\!{\rm
I}}\alpha(\mathbf{k})\mathbb{T}^{\mathbf{k}}(x),
\end{eqnarray*}
where
$\mathbb{T}^{\mathbf{k}}(x)=T_1^{k_1}\cdots T_d^{k_d}(x).$

\begin{theorem}\label{th5_5}
Let $T_i\in C_{1,\infty}(\nabla,m)$,
 $i=1,\cdots,d$, $1<p<\infty$ and let $\{\alpha(\mathbf{k})\}$ be a $d$--dimensional  BB-sequence of real numbers.
Then averages $A_{\mathbf{n}}(\mathbb{T})(x)$ $(o)$- converge  in
$L_p(\nabla,m)$ for every $x\in L_p(\nabla,m)$, in addition
$$\|\sup\{ |A_{\mathbf{k}}(\mathbb{T})(x)|: \mathbf{k}\in(\mathbb{N}\cup \{0\})^d\}\|_{p} \leq
\left(\frac{p}{p-1}\right)^d
\sup\limits_{\mathbf{k}}|\alpha(\mathbf{k})| \| x \|_{p}.\eqno(1)
$$
\end{theorem}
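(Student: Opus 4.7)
\noindent\textbf{Proof proposal for Theorem \ref{th5_5}.}
The plan is to transfer the assertion stalkwise to classical $L_p$--spaces and then lift the result back to the Banach--Kantorovich lattice, following the pattern already used for Theorems \ref{th3_5} and \ref{th5_1}. First I would apply Theorem \ref{th3_2} to each $T_i$ to obtain, for every $\omega\in\Omega$, an $L_1$--$L_\infty$ contraction $T_{i,\omega}\in C_{1,\infty}(\nabla_\omega,m_\omega)$ such that $\Phi(T_ix)(\omega)=T_{i,\omega}(\Phi(x)(\omega))$ for a.e.\ $\omega$ and every $x\in L_1(\nabla,m)$. Since these identities compose, one gets $\Phi(\mathbb{T}^{\mathbf{k}}x)(\omega)=\mathbb{T}_\omega^{\mathbf{k}}(\Phi(x)(\omega))$ where $\mathbb{T}_\omega=\{T_{1,\omega},\dots,T_{d,\omega}\}$, and consequently
$$\Phi(A_{\mathbf{n}}(\mathbb{T})(x))(\omega)=A_{\mathbf{n}}(\mathbb{T}_\omega)(\Phi(x)(\omega))\quad\text{a.e.\ }\omega\in\Omega.$$
By Theorem \ref{th3_1} and the reasoning of Theorem \ref{th5_1}, each $\Phi(x)(\omega)$ belongs to $L_p(\nabla_\omega,m_\omega)$ for a.e.\ $\omega$.

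Next I would invoke the scalar $d$--dimensional weighted ergodic theorem of Jones--Olsen \cite{JO} on the stalk: for every $\omega$ in a conull set, $A_{\mathbf{n}}(\mathbb{T}_\omega)(\Phi(x)(\omega))$ converges in $L_p(\nabla_\omega,m_\omega)$ as $\mathbf{n}\to\infty$, and the $d$--dimensional maximal inequality
$$\Bigl\|\sup_{\mathbf{k}}|A_{\mathbf{k}}(\mathbb{T}_\omega)(\Phi(x)(\omega))|\Bigr\|_{L_p(\nabla_\omega,m_\omega)}\leq\Bigl(\frac{p}{p-1}\Bigr)^d\sup_{\mathbf{k}}|\alpha(\mathbf{k})|\,\|\Phi(x)(\omega)\|_{L_p(\nabla_\omega,m_\omega)}$$
holds. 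Applying Corollary \ref{c3_4} to the countable family $\{A_{\mathbf{n}}(\mathbb{T})(x)\}_{\mathbf{n}}$ furnishes an element $\widetilde{x}\in L_0(\nabla,m)$ with $\Phi(\widetilde{x})(\omega)$ equal to the stalkwise limit and $A_{\mathbf{n}}(\mathbb{T})(x)\stackrel{(o)}\to\widetilde{x}$ in $L_0(\nabla,m)$.

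For the maximal inequality and the upgrade from $L_0$-- to $L_p$--convergence, I would use Theorem \ref{th3_3} to read the supremum $w:=\sup_{\mathbf{k}}|A_{\mathbf{k}}(\mathbb{T})(x)|$ stalkwise as $\Psi(w)(\omega)=\sup_{\mathbf{k}}|A_{\mathbf{k}}(\mathbb{T}_\omega)(\Phi(x)(\omega))|$ for a.e.\ $\omega$. Combined with the stalkwise maximal inequality this gives
$$\|w\|_p(\omega)\leq\Bigl(\frac{p}{p-1}\Bigr)^d\sup_{\mathbf{k}}|\alpha(\mathbf{k})|\,\|x\|_p(\omega),$$
which is exactly the $L_0(\Omega)$--valued estimate (1). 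In particular the sequence $A_{\mathbf{n}}(\mathbb{T})(x)$ is order bounded in $L_p(\nabla,m)$. Order boundedness in $L_p(\nabla,m)$ together with the $(o)$--convergence of $A_{\mathbf{n}}(\mathbb{T})(x)$ to $\widetilde{x}$ in $L_0(\nabla,m)$ then forces $\widetilde{x}\in L_p(\nabla,m)$ and the $(o)$--convergence to take place already in $L_p(\nabla,m)$, by the same final argument used at the end of Theorem \ref{th3_5}.

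The main obstacle is checking the stalkwise hypotheses of the Jones--Olsen theorem: one must make sure that a single conull set of $\omega$'s works simultaneously for convergence of \emph{all} averages $A_{\mathbf{n}}(\mathbb{T}_\omega)(\Phi(x)(\omega))$ and for the supremum bound. This is routine because the index set $(\mathbb{N}\cup\{0\})^d$ is countable, so one may take the countable intersection of the a.e.\ statements coming from Theorem \ref{th3_2} applied to each $T_i$; the remaining verification (identifying the stalkwise supremum with $\Psi$ of the lattice supremum, and identifying the contractions $T_{i,\omega}$ composed with each other with $\mathbb{T}_\omega^{\mathbf{k}}$) is a direct application of the tools already developed in Sections 2 and 3.
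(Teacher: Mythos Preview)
Your proposal is correct and follows the same overall strategy as the paper: reduce to the stalks via Theorem \ref{th3_2}, apply the classical Jones--Olsen theorem there for the $(o)$--convergence, and then lift the convergence and the supremum back using Corollary \ref{c3_4} and Theorem \ref{th3_3}, finishing exactly as in Theorem \ref{th3_5}.

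The one place where you diverge from the paper is in establishing the maximal inequality. You quote it directly for the (not necessarily positive) contractions $T_{i,\omega}$ as part of the Jones--Olsen package. The paper instead argues it separately by passing to the positive moduli $|T_i|\in C_{1,\infty}(\nabla,m)$ and their stalkwise representatives $(S_i)_\omega$ (from the proof of Theorem \ref{th3_1}), invoking Krengel's multiparameter dominated ergodic theorem for the \emph{unweighted positive} averages $B_{\mathbf{n}}(\mathbb{S}_\omega)(|\Phi(x)(\omega)|)$, and then using the pointwise domination $|(T_i)_\omega g|\leq (S_i)_\omega|g|$ together with $|\alpha(\mathbf{k})|\leq\sup_{\mathbf{k}}|\alpha(\mathbf{k})|$ to bound $|A_{\mathbf{n}}(\mathbb{T}_\omega)|$ by $\sup_{\mathbf{k}}|\alpha(\mathbf{k})|\,B_{\mathbf{n}}(\mathbb{S}_\omega)$. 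The paper's route makes transparent why the constant is exactly $(p/(p-1))^d$ (it is the $d$--fold iteration of the Akcoglu--Dunford--Schwartz dominated estimate for positive contractions) and keeps the maximal step independent of the weighted theorem; your route is shorter provided the maximal inequality for general $L_1$--$L_\infty$ contractions is taken as given.
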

\begin{proof}
Let $(T_i)_\omega$ be an $L_1$--$L_\infty$ contraction in
$L_1(\nabla_\omega,m_\omega)$ such that $(T_i)_\omega(\Phi(x)(\omega))
= \Phi(T_i(x))(\omega)$  for each a.e. $\omega\in\Omega$ and for every
$x\in L_1(\nabla,m)$, $i=1,\cdots,d$. Since $(T_i)_\omega \in
C_{1,\infty}(\nabla_\omega,m_\omega)$ by Theorem 1.2
\cite{JO} there exists $v(\omega)\in L_0(\nabla_\omega,m_\omega)$
such that  $A_{\mathbf{n}}(\mathbb{T}_\omega)(\Phi(x)(\omega))\stackrel{(o)}\rightarrow
v(\omega)$
 in $L_0(\nabla_\omega,m_\omega)$, where  $\mathbb{T}_\omega=((T_1)_\omega,\cdots,(T_d)_\omega)$.
Since  $A_{\mathbf{n}}(\mathbb{T}_\omega)(\Phi(x)(\omega)) =
\Phi(A_{\mathbf{n}}(\mathbb{T})(x))(\omega)$  for  a.e.
$\omega\in\Omega$ Corollary \ref{c3_4} implies that there exists
$\tilde{x}\in L_0(\nabla,
 m)$ such that $A_{\mathbf{n}}(\mathbb{T})(x)\stackrel{(o)}\rightarrow\tilde{x}$ in $L_0(\nabla,
 m)$.

 We consider the family $|\mathbb{T}|=\{|T_1|,|T_2|,\dots,|T_d|\}\subset C_{1,\infty}(\nabla,m).$

Let $(S_i)_\omega$ be a positive contraction in
$L_p(\nabla_\omega,m_\omega)$ such that
$(S_i)_\omega(\Phi(x)(\omega)) = \Phi(|T_i|x)(\omega)$ for  a.e.
$\omega\in\Omega$ (see the proof of Theorem \ref{th3_1}). By
Theorem 1.2 \cite[ch.YI, \S 6.1]{Kr} we have that sequence
$$B_{\mathbf{n}}(\mathbb{S}_\omega)(|\Phi(x)(\omega)|)=\frac{1}{\mathbf{|n|}}
\sum\limits_{{\mathbf{k}}=\mathbf{0}}^{{\mathbf{n}}-{\bf
1}\!\!{\rm I}}(S_1)_\omega^{k_1}\cdots
(S_d)_\omega^{k_d}(|\Phi(x)(\omega)|)$$
 is order bounded  in
$L_p(\nabla_\omega,m_\omega)$ and
$$\|\sup\{
B_{\mathbf{n}}(\mathbb{S}_\omega)(|\Phi(x)(\omega)|):
\mathbf{k}\in(\mathbb{N}\cup
\{0\})^d\}\|_{L_p(\nabla_\omega,m_\omega)} \\\leq
\left(\frac{p}{p-1}\right)^d\|\Phi(x)(\omega)\|_{L_p(\nabla_\omega,m_\omega})
$$ for  a.e. $\omega\in\Omega$, where
$\mathbb{S}_\omega=\{(S_1)_\omega,\cdots,(S_d)_\omega\}.$ Since
$|(T_i)_\omega g|\leq(S_i)_\omega|g|$ for all $g\in
L_1(\nabla_\omega,m_\omega)$(see the proof of Theorem \ref{th3_2}) we obtain
that
$$|A_{\mathbf{n}}(\mathbb{T}_\omega)(\Phi(x)(\omega))|\leq
\frac{1}{\mathbf{|n|}}\sum\limits_{k_1=0}^{n_1-1}\cdots\sum\limits_{k_d=0}^{n_d-1}|\alpha(\mathbf{k})||(T_1)_\omega^{k_1}|\cdots
|(T_d)_\omega^{k_d}|(|\Phi(x)(\omega)|)\leq$$
$$\leq \sup\limits_{\mathbf{k}}|\alpha(\mathbf{k})|\sum\limits_{k_d=0}^{n_d-1}(S_1)_\omega^{k_1}
\cdot\cdots\cdot(S_d)_\omega^{k_d}(|\Phi(x)(\omega)|)=
\sup\limits_{\mathbf{k}}|\alpha(\mathbf{k})|B_{\mathbf{n}}(\mathbb{S}_\omega)(|\Phi(x)(\omega)|).$$

Using now Theorem \ref{th3_3}, we have that the sequence
$A_{\mathbf{n}}(\mathbb{T})(x)$ is order bounded in $L_p(\nabla,
 m)$ and  the inequality $(1)$ holds.  Furthermore, the $(o)$--convergence  $A_{\mathbf{n}}(\mathbb{T})(x)\stackrel{(o)}\rightarrow\tilde{x}$ in $L_0(\nabla,
 m)$ implies  that $A_{\mathbf{n}}(\mathbb{T})(x)$
 $(o)$--converges to $\tilde{x}$ in $L_p(\nabla,
 m)$.

\end{proof}


\begin{thebibliography}{999}
\bibitem{Akc}  \quad M.A. Akcoglu,   \textit{A pointwise ergodic theorems in $L_p$--spaces.}
 Can. J. Math. \textbf{27} (1975), 1075--1082.

\bibitem{CGa} \quad V.I. Chilin,  I.G. Ganiev,
\textit{An individual ergodic theorem for contractions in the
Banach-Kantorovich lattice $L^p(\nabla,m)$.} Russian Math.
(Izv. VUZ). \textbf{44} (2000), No. 7, 77--79.

\bibitem{Cz1} \quad V.I. Chilin,  B.S. Zakirov, \textit{Decomposition of the measure with the values in order complete vector lattices.}
Vladikavkaz. Math. J. \textbf{ 4(10)} (2008), 31--38,(Russian).

\bibitem{Cz} \quad V.I. Chilin,  B.S. Zakirov, \textit{Noncommutative $L_p$--spaces associated with a Maharam trace.}
J. Operator Theory. \textbf{1} (2012), 67--83.

\bibitem{ZC}  \quad V.I. Chilin,  B.S. Zakirov, \textit {Ergodic theorems for contractions in Orlicz--Kantorovich lattices}.
Sib. Math. J. \textbf{ 50} (2009), 1027--1037.

\bibitem{zak} \quad V.I. Chilin,  B.S. Zakirov, \textit { Noncommutative integration for
traces with values in Kantorovich-Pinsker spaces}. Russian
Math. (Izv. VUZ)   \textbf{54} (2010), No. 10,  15-26 .


\bibitem{suk}   \quad  P.G. Doodds, F.A. Sukochev, G. Schliichtermann,  \textit{Weak compactness criteria in symmetric spaces of measurable
operators.}  Math. Proc. Camb. Phil. Soc. \textbf{131} (2001), 363-384.

\bibitem{DSh}  \quad  N. Dunford  and  J. Schwartz, \textit{Linear Operators. Part I}, Interscience, New York (1958).

\bibitem{Ga3}  \quad  I.G. Ganiev,  \textit{Measurable bundles of lattices nd their
applications}. In : Studies on Functional Analysis and its
Applications, Nauka, Moscow (2006),  pp. 9--49, (Russian).

\bibitem{GM}  \quad  I.G. Ganiev, F. Mukhamedov,  \textit{Weighted ergodic theorems for  Banach-
Kantorovich lattice $L_{p}(\nabla,m)$.} Lobachevski journal of
Mathematics.  \textbf{34} (2013), No 1, 1-10.

\bibitem{G2} \quad  A.E. Gutman,   \textit{Banach bundles in the theory of lattice-normed spaces.
Order-compatible linear operators}, Novosibirsk, Trudy Inst. Mat. \textbf{29} (1995), 63-211. (Russian)

\bibitem{JO}  \quad  R.L. Jones, J. Olsen,  \textit{Multiparameter weighted ergodic theorems}.
Can. J. Math. \textbf {46} (1994), 343--356.


\bibitem{KR} \quad  M.A. Krasnoselskii  and JA. B. Rutickii,   \textit{Convex functions and Orlicz spaces}. Translated from the first Russian edition. Groningen, 1961.

\bibitem{Kr}  \quad U. Krengel,  \textit{Ergodic Theorems,} Walter de Grugwer, Berlin,
New--York. 1985.

\bibitem{kre}  \quad S.G. Kreyin,   Yu.I. Petunin, Y.M. Semyenov,  \textit{Interpolation of
linear operators}, Translations of Mathematical Monographs, \textbf { 54},
American Mathematical Society, Providence (1982).


\bibitem{K2} \quad A.G. Kusraev,  \textit{Dominanted operators},
Mathematics and its Applications, \textbf {519}. Kluwer Academic
Publishers, Dordrecht, 2000.

\bibitem{K3} \quad A.G. Kusraev, S.S. Kutateladze,  \textit{Introduction to Boolean Valued Analysis},
Moscow, Nauka, 2005, (Russian).

\bibitem{Vl} \quad D.A. Vladimirov,  \textit{Boolesche Algebren}, Akademie Verlag, 1978, (Translated from Russian)


\bibitem{Vul}  \quad B.Z. Vulich, \textit{Introduction to the theory of partially ordered spaces.} Wolters-Noordhoff. Groningen. 1967.



\end{thebibliography}
\end{document}